\theoremstyle{plain}
\newtheorem{theorem}{Theorem}[section]
\newtheorem{corollary}[theorem]{Corollary}
\newtheorem{lemma}[theorem]{Lemma}
\newtheorem{proposition}[theorem]{Proposition}
\theoremstyle{definition}
\newtheorem{remark}[theorem]{Remark}
\newtheorem{example}[theorem]{Example}
\numberwithin{equation}{section}
\def\ldiv{\backslash}
\def\ld#1{\backslash^{\!\!#1}}
\def\rd#1{/_{\!\!#1}}
\def\aut#1{{\mathrm{Aut}(#1)}}
\def\lmlt#1{\mathrm{LMlt}(#1)}
\def\dis#1{\mathrm{Dis}(#1)}
\def\disp#1{\mathrm{Dis^+}(#1)}
\def\disn#1{\mathrm{Dis^-}(#1)}
\title{Idempotent solutions of the Yang-Baxter equation and twisted group division}
\author{David Stanovsk\'y}
\address{Department of Algebra, Faculty of Mathematics and Physics, Charles University, Prague, Czechia}
\email{stanovsk@karlin.mff.cuni.cz}
\author{Petr Vojt\v echovsk\'y}
\address{Department of Mathematics \\ University of Denver \\ Denver (CO), USA}
\email{petr@math.du.edu}
\thanks{The paper is written within the framework of the cooperation grant LTAUSA19070. David Stanovsk\'y partially supported by the GA\v CR grant 18-20123S. Petr Vojt\v echovsk\'y partially supported by the 2019 PROF grant of the University of Denver.}
\keywords{Quantum Yang-Baxter equation, set-theoretical solution to Yang-Baxter equation, braiding, idempotent braiding, twisted Ward quasigroup, Ward quasigroup.}
\subjclass{16T25, 20N05}
\date{\today}
\begin{document}

\begin{abstract}
Idempotent left nondegenerate solutions of the Yang-Baxter equation are in one-to-one correspondence with twisted Ward left quasigroups, which are left quasigroups satisfying the identity $(x*y)*(x*z)=(y*y)*(y*z)$. Using combinatorial properties of the Cayley kernel and the squaring mapping, we prove that a twisted Ward left quasigroup of prime order is either permutational or a quasigroup. Up to isomorphism, all twisted Ward quasigroups $(X,*)$ are obtained by twisting the left division operation in groups (that is, they are of the form $x*y=\psi(x^{-1}y)$ for a group $(X,\cdot)$ and its automorphism $\psi$), and they correspond to idempotent latin solutions. We solve the isomorphism problem for idempotent latin solutions.
\end{abstract}

\maketitle

\section{Introduction}

We continue our program of studying left nondegenerate set-theoretic solutions of the Yang-Baxter equation from an algebraic perspective, taking advantage of the associated left quasigroups \cite{BKSV,HSV}.

It is well-known that the algebraic counterpart to derived left nondegenerate solutions of the Yang-Baxter equation are \emph{racks}, which are left quasigroups satisfying the identity
\begin{equation}\label{Eq:RACK}
    (x*y)*(x*z)=x*(y*z).
\end{equation}
There is vast literature on racks, on idempotent racks (i.e., \emph{quandles}), and on their usage in knot theory \cite{AG,EN,FR,Nosaka,Sta-latin}.

Jones \cite{Jones} and Turaev \cite{Turaev} showed that Yang-Baxter operators $r:V\otimes V\to V\otimes V$ that satisfy a quadratic equation $r^2=ar+b$ give rise to polynomial invariants of knots. In the realm of set-theoretic solutions, the quadratic equation reduces to either $r^2=1$ (the \emph{involutive} case) or $r^2=r$ (the \emph{idempotent} case).

Rump showed in \cite{Rump} that the algebraic counterpart to involutive left nondegenerate solutions are \emph{cycle sets}, which are left quasigroups satisfying the identity
\begin{equation}\label{Eq:RUMP}
    (x*y)*(x*z) = (y*x)*(y*z).
\end{equation}
We have renamed cycle sets \emph{Rump left quasigroups} in \cite{BKSV} and we continue to use this terminology here.

In this paper we focus on idempotent left nondegenerate solutions. In Section \ref{sec:uniform} we provide a uniform treatment to the correspondences for derived, involutive and idempotent left nondegenerate solutions. We recover the above correspondences for derived and involutive left nondegenerate solutions, and we prove that the algebraic counterpart to idempotent left nondegenerate solutions are left quasigroups satisfying the identity
\begin{equation}\label{Eq:tW}
    (x*y)*(x*z) = (y*y)*(y*z),\tag{tW}
\end{equation}
which we call \emph{twisted Ward left quasigroups} for reasons explained below.

In Section \ref{sec:tw-quasigroups} we completely classify twisted Ward quasigroups and hence idempotent latin solutions of the Yang-Baxter equation, cf. Theorem \ref{Th:main}. First we show that, unlike in the case of latin racks and Rump quasigroups, every twisted Ward quasigroup is isotopic to a group. In fact, every twisted Ward quasigroup has the form $(X,*)$ with $x*y = c\psi(x^{-1}y)$, where $(X,\cdot)$ is a group, $\psi$ is an automorphism of $(X,\cdot)$ and $c\in X$. Classifying these quasigroups up to isomorphism, it suffices to consider groups $(X,\cdot)$ up to isomorphism, automorphisms $\psi$ up to conjugation in the automorphism group of $(X,\cdot)$, and $c=1$. Summarizing, \emph{idempotent latin solutions of the Yang-Baxter equation are in one-to-one correspondence with conjugacy classes of automorphisms of groups}.

The above representation theorem is the reason why we have chosen the terminology ``twisted Ward quasigroup.'' Quasigroups $(X,*)$ defined over groups $(X,\cdot)$ via $x*y = x^{-1}y$ were first investigated by Ward in \cite{Ward} and later became known as \emph{Ward quasigroups}. They are precisely the quasigroups satisfying the identity
\begin{displaymath}
    (x*y)*(x*z)=y*z
\end{displaymath}
and they can be though of as groups axiomatized by left division instead of multiplication, cf. \cite{JV,Polonijo,Rabinow,Ward}.

In Section \ref{sec:prime} we classify twisted Ward left quasigroups of prime order. We consider two equivalence relations on a twisted Ward left quasigroup, namely the Cayley kernel and the kernel of the squaring map, and we show that, in the finite case, each of the equivalences has blocks of uniform size. Consequently, a twisted Ward left quasigroup of prime order is either permutational or a quasigroup. This complements the result of Etingof, Soloviev and Guralnick \cite{ESG} who classified all indecomposable nondegenerate solutions to the Yang-Baxter equation with a prime number of elements.

\section{Three classes of left nondegenerate braidings}\label{sec:uniform}

A binary algebraic structure $(X,*)$ is a \emph{left quasigroup} if all left translations $L_x:X\to X$, $y\mapsto x*y$ are bijections of $X$. In a left quasigroup $(X,*)$ we can define the left division operation by $x\ld*y = L_x^{-1}(y)$ and observe that, obviously,
\begin{equation}\label{Eq:LeftQuasigroup}
    x\ld*(x*y) = y = x*(x\ld*y)
\end{equation}
holds for all $x,y\in X$. Conversely, if $(X,*,\ld*)$ is a set equipped with two binary operations satisfying the identity \eqref{Eq:LeftQuasigroup}, then $(X,*)$ is a left quasigroup with left division $\ld*$. Note that if $(X,*)$ is a left quasigroup with left division $\ld*$, then $(X,\ld*)$ is a left quasigroup with left division $*$ \cite{Pfl}. 

Dually, $(X,*)$ is a \emph{right quasigroup} if all right translations $R_x:X\to X$, $y\mapsto y*x$ are bijections of $X$. The right division operation is then defined by $x\rd* y = R_y^{-1}(x)$. A left quasigroup that is also a right quasigroup is called a \emph{quasigroup}.

A \emph{set-theoretic solution} of the Yang-Baxter equation
\begin{equation}\label{Eq:YBE}
    (r\times 1)(1\times r)(r\times 1) = (1\times r)(r\times 1)(1\times r)\tag{YB}
\end{equation}
is a mapping $r:X\times X\to X\times X$ such that \eqref{Eq:YBE} holds as an equality of mappings $X\times X\times X\to X\times X\times X$ under composition \cite{Drinfeld}. Set-theoretic solutions of \eqref{Eq:YBE} are also known as \emph{braidings} \cite{L}.

For any mapping (not necessarily a braiding) $r:X\times X\to X\times X$ we write
\begin{displaymath}
    r(x,y) = (x\circ y,\,x\bullet y)
\end{displaymath}
for suitable binary operations $\circ$ and $\bullet$ on $X$. Straightforward calculation then shows that $r$ is a braiding if and only if the following three identities hold:
\begin{align}
    x\circ(y\circ z) &= (x\circ y)\circ((x\bullet y)\circ z),   \tag{YB1}\label{Eq:YB1} \\
    (x\circ y)\bullet((x\bullet y)\circ z) &= (x\bullet (y\circ z))\circ(y\bullet z), \tag{YB2}\label{Eq:YB2} \\
    (x\bullet y)\bullet z &= (x\bullet(y\circ z))\bullet(y\bullet z). \tag{YB3}\label{Eq:YB3}
\end{align}

A mapping $r:X\times X\to X\times X$ is said to be \emph{left nondegenerate} if $(X,\circ)$ is a left quasigroup; \emph{nondegenerate} if $(X,\circ)$ is a left quasigroup and $(X,\bullet)$ is a right quasigroup; \emph{latin} if $(X,\circ)$ is a quasigroup; \emph{derived} if $x\bullet y=x$ for every $x,y\in X$; \emph{involutive} if $r^2=1$ \cite{Rump}; and \emph{idempotent} if $r^2=r$ \cite{L}.

We are mostly interested in braidings where one of the two operations $\circ$, $\bullet$ is either trivial or can be reconstructed from the other one. The following result gives three classes of mappings with such a property.

\begin{lemma}\label{Lm:OneOp}
Let $r:X\times X\to X\times X$ be a mapping, $r(x,y)=(x\circ y,x\bullet y)$. Then:
\begin{enumerate}
\item[(i)] $r$ is derived if and only if $x\bullet y=x$.
\item[(ii)] $r$ is involutive and left nondegenerate if and only if $(X,\circ)$ is a left quasigroup and $x\bullet y = (x\circ y)\ld{\circ} x$.
\item[(iii)] $r$ is idempotent and left nondegenerate if and only if $(X,\circ)$ is a left quasigroup and $x\bullet y = (x\circ y)\ld{\circ}(x\circ y)$.
\end{enumerate}
\end{lemma}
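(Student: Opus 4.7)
The plan is to read off both components of the condition $r^2=r^{\varepsilon}$ (where $\varepsilon=0$ for involutive and $\varepsilon=1$ for idempotent), solve the first component equation for $x\bullet y$ using left nondegeneracy of $\circ$, and then verify that the second component equation is forced.

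Part (i) is immediate: being derived is by definition the identity $x\bullet y=x$.

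For (ii) and (iii), I would start by computing
\[
    r^2(x,y) = \bigl((x\circ y)\circ(x\bullet y),\ (x\circ y)\bullet(x\bullet y)\bigr),
\]
so that $r^2=1$ is equivalent to the system $(x\circ y)\circ(x\bullet y)=x$ and $(x\circ y)\bullet(x\bullet y)=y$, while $r^2=r$ is equivalent to $(x\circ y)\circ(x\bullet y)=x\circ y$ and $(x\circ y)\bullet(x\bullet y)=x\bullet y$. In each case, assuming $(X,\circ)$ is a left quasigroup, $L_{x\circ y}$ is a bijection, so the first equation of the pair can be solved uniquely for $x\bullet y$. This gives $x\bullet y=(x\circ y)\ld{\circ} x$ in the involutive case and $x\bullet y=(x\circ y)\ld{\circ}(x\circ y)$ in the idempotent case, which yields the ``only if'' direction in both (ii) and (iii).

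For the ``if'' direction, assume $(X,\circ)$ is a left quasigroup and $x\bullet y$ is defined by the stated formula, so $r$ is automatically left nondegenerate and the first coordinate equation holds by construction. I would then verify the second coordinate equation by direct substitution, abbreviating $u=x\circ y$, $v=x\bullet y$: in case (ii), $v=u\ld{\circ}x$ gives $u\circ v=x$, hence $u\bullet v=(u\circ v)\ld{\circ}x=x\ld{\circ}x$, and one checks this reduces to $y$ after using $u=x\circ y$; in case (iii), $v=u\ld{\circ}u$ gives $u\circ v=u$, so $u\bullet v=(u\circ v)\ld{\circ}(u\circ v)=u\ld{\circ}u=v$, as required.

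There is no real obstacle here beyond careful bookkeeping with the left division symbol; the only place where the argument has any content is the routine verification that the second-coordinate equation is automatic once the first is solved, and both verifications amount to one line of cancellation using \eqref{Eq:LeftQuasigroup}.
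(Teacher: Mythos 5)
Your overall strategy is exactly the paper's: expand $r^2(x,y)=\bigl((x\circ y)\circ(x\bullet y),\,(x\circ y)\bullet(x\bullet y)\bigr)$, use left nondegeneracy to solve the first-coordinate equation uniquely for $x\bullet y$, and then check that the second-coordinate equation comes for free. Parts (i) and (iii) are fine as written; in (iii) your computation $u\bullet v=(u\circ v)\ld{\circ}(u\circ v)=u\ld{\circ}u=v$ is precisely the paper's verification.

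In case (ii), however, the verification of the second-coordinate equation contains a substitution error that makes the step false as stated. With $u=x\circ y$ and $v=u\ld{\circ}x$, the defining formula $a\bullet b=(a\circ b)\ld{\circ}a$ gives $u\bullet v=(u\circ v)\ld{\circ}u$, not $(u\circ v)\ld{\circ}x$: you put $x$ where the first argument $u$ belongs. The resulting expression $x\ld{\circ}x$ depends only on $x$, so it cannot ``reduce to $y$'' by using $u=x\circ y$; for $|X|\ge 2$ it simply fails for some $y$. The correct one-line computation is $u\bullet v=(u\circ v)\ld{\circ}u=x\ld{\circ}u=x\ld{\circ}(x\circ y)=y$ by \eqref{Eq:LeftQuasigroup}, which is exactly the identity the paper checks. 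With that single line repaired, your proof coincides with the paper's.
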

\begin{proof}
Part (i) holds by definition. For (ii), note that $r$ is involutive if and only if the identities
\begin{displaymath}
   (x\circ y)\circ(x\bullet y) = x,\quad\quad\quad (x\circ y)\bullet(x\bullet y) = y
\end{displaymath}
hold. If $r$ is also left nondegenerate then $x\bullet y = (x\circ y)\ld{\circ} x$ is equivalent to the first identity, and the second identity becomes
\begin{displaymath}
    ((x\circ y)\circ ((x\circ y)\ld{\circ}x))\ld{\circ}(x\circ y) = y,
\end{displaymath}
which holds in any left quasigroup $(X,\circ)$. Finally, for (iii), note that $r$ is idempotent if and only if the identities
\begin{displaymath}
    (x\circ y)\circ(x\bullet y) = x\circ y,\quad\quad\quad (x\circ y)\bullet(x\bullet y) = x\bullet y
\end{displaymath}
hold. If $r$ is also left nondegenerate then $x\bullet y = (x\circ y)\ld{\circ}(x\circ y)$ is equivalent to the first identity, and the second identity becomes
\begin{displaymath}
    (z\circ (z\ld\circ z))\ld\circ (z\circ (z\ld\circ z)) = z\ld\circ z
\end{displaymath}
with $z=x\circ y$, which holds in any left quasigroup $(X,\circ)$.
\end{proof}

The proof of the following result is somewhat involved. Nevertheless it is purely equational and can be verified in a fraction of a second by an automated theorem prover, such as \texttt{Prover9} \cite{McCune}.

\begin{proposition}\label{Pr:OneOp}
Let $r:X\times X\to X\times X$ be a mapping. Then:
\begin{enumerate}
\item[(i)] $r$ is a derived braiding if and only if $x\bullet y=x$ and
\begin{equation}\label{Eq:Rack}
    x\circ (y\circ z) = (x\circ y)\circ (x\circ z).
\end{equation}
\item[(ii)] $r$ is an involutive left nondegenerate braiding if and only if $(X,\circ)$ is a left quasigroup, $x\bullet y = (x\circ y)\ld\circ x$ and
\begin{equation}\label{Eq:Rump}
    x\circ (y\circ z) = (x\circ y)\circ (((x\circ y)\ld\circ x)\circ z).
\end{equation}
\item[(iii)] $r$ is an idempotent left nondegenerate braiding if and only if $(X,\circ)$ is a left quasigroup, $x\bullet y = (x\circ y)\ld\circ(x\circ y)$ and
\begin{equation}\label{Eq:tWard}
    x\circ (y\circ z) = (x\circ y)\circ (((x\circ y)\ld\circ(x\circ y))\circ z).
\end{equation}
\end{enumerate}
\end{proposition}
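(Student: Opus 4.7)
The plan is to prove each of (i), (ii), (iii) as an ``if and only if'' by combining Lemma \ref{Lm:OneOp} with the three-identity characterization (\ref{Eq:YB1})--(\ref{Eq:YB3}) of braidings. In each case, the forward direction extracts the claimed single identity by plugging the formula for $\bullet$ supplied by Lemma \ref{Lm:OneOp} into (\ref{Eq:YB1}). The backward direction assumes the single identity together with the prescribed form of $\bullet$ and checks that (\ref{Eq:YB1}), (\ref{Eq:YB2}), (\ref{Eq:YB3}) all hold.

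Case (i) is immediate: with $x\bullet y = x$, identity (\ref{Eq:YB1}) is literally (\ref{Eq:Rack}), while (\ref{Eq:YB2}) and (\ref{Eq:YB3}) degenerate respectively to $x\circ y=x\circ y$ and $x=x$.

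Cases (ii) and (iii) proceed along the same lines but with substantially more bookkeeping. One substitutes $x\bullet y = (x\circ y)\ld\circ x$, respectively $x\bullet y = (x\circ y)\ld\circ(x\circ y)$, into (\ref{Eq:YB1}) to obtain (\ref{Eq:Rump}), respectively (\ref{Eq:tWard}). Then, assuming this identity together with the left quasigroup axioms (\ref{Eq:LeftQuasigroup}), one verifies (\ref{Eq:YB2}) and (\ref{Eq:YB3}) by rewriting both sides into a common normal form, using only the substitution formula for $\bullet$ and the cancellation laws $x\circ(x\ld\circ y)=y=x\ld\circ(x\circ y)$. A useful intermediate step is to give a name, say $u = x\circ y$ and $v = (x\bullet y)\circ z$, so that the left-hand sides of (\ref{Eq:YB2}) and (\ref{Eq:YB3}) become $u\bullet v$ and $(x\bullet y)\bullet z$; the right-hand sides are then massaged using (\ref{Eq:YB1}) already established, collapsing the composed terms in the same way.

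The main obstacle is precisely the combinatorial bulk of these equational reductions for (ii) and (iii): every expansion of $\bullet$ introduces nested left divisions, and repeated applications of (\ref{Eq:LeftQuasigroup}) must be tracked carefully to see that the heads on both sides line up before cancellation can occur. Since the whole task is purely equational in the language $(X,\circ,\ld\circ)$ and relies only on the axioms already at hand, it is well suited to an automated theorem prover such as \texttt{Prover9}, as the authors indicate; a fully human proof simply spells out the substitutions and performs the cancellations one step at a time.
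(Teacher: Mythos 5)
Your overall architecture coincides with the paper's: use Lemma \ref{Lm:OneOp} to pin down $\bullet$, observe that \eqref{Eq:YB1} then becomes the single stated identity, and reduce everything to showing that \eqref{Eq:YB2} and \eqref{Eq:YB3} are consequences of \eqref{Eq:YB1} together with the formula for $\bullet$. Part (i) is complete and agrees with the paper.

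For parts (ii) and (iii), however, the only nontrivial content of the proposition is precisely the step you defer: deriving \eqref{Eq:YB2} and \eqref{Eq:YB3} from \eqref{Eq:YB1}. Your description of how this goes --- rewriting both sides into a common normal form using the substitution formula for $\bullet$ and the cancellation laws --- is not a checkable argument, and as stated it is too optimistic: there is no evident normal form in the variety of left quasigroups satisfying \eqref{Eq:Rump} or \eqref{Eq:tWard}, and blind expansion of $\bullet$ followed by cancellation does not by itself bring the two sides of \eqref{Eq:YB2} or \eqref{Eq:YB3} into agreement. What makes the verification work in the paper are \emph{targeted substitutions} into the established identities: substituting $y\ld\circ z$ for $z$ in \eqref{Eq:YB1} produces the auxiliary identity $(x\circ y)\ld\circ(x\circ z)=(x\bullet y)\circ(y\ld\circ z)$, of which both sides of \eqref{Eq:YB2} are then recognized as instances after a further change of variables; and rewriting \eqref{Eq:Rump} (resp.\ \eqref{Eq:tWard}) as an equation between composites of left translations and substituting $x\ld\circ y$ for $y$ yields $L_{x\ld\circ y}^{-1}L_x^{-1}=L_{y\ld\circ x}^{-1}L_y^{-1}$ (resp.\ $L_{x\ld\circ y}^{-1}L_x^{-1}=L_{y\ld\circ y}^{-1}L_y^{-1}$), which is exactly the form to which \eqref{Eq:YB3} reduces. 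These substitutions are the missing idea; without exhibiting them (or actually producing and citing a \texttt{Prover9} run), your treatment of (ii) and (iii) is a plan rather than a proof.
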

\begin{proof}
(i) By Lemma \ref{Lm:OneOp}, $r$ is a derived mapping if and only if $x\bullet y=x$ holds. Then \eqref{Eq:YB1} is equivalent to \eqref{Eq:Rack}, \eqref{Eq:YB2} is equivalent to the trivial identity $x\circ y = x\circ y$, and \eqref{Eq:YB3} is equivalent to the trivial identity $x=x$.

For the rest of the proof, let us write $xy$ instead of $x\circ y$, $x\ldiv y$ instead of $x\ld\circ y$, and $[x,y]$ instead of $x\bullet y$ to save space and improve legibility. The identities \eqref{Eq:YB1}--\eqref{Eq:YB3} then become
\begin{align*}
    x(yz) &= (xy)([x,y]z),\\
    [xy,[x,y]z] &= [x,yz][y,z],\\
    [[x,y],z] &= [[x,yz],[y,z]].
\end{align*}

(ii) By Lemma \ref{Lm:OneOp}, $r$ is an involutive left nondegenerate mapping if and only if $(X,\cdot)$ is a left quasigroup and $[x,y] = (xy)\ldiv x$. Hence \eqref{Eq:YB1} holds if and only if \eqref{Eq:Rump} holds. Suppose that \eqref{Eq:YB1} holds and let us rewrite it as $(xy)\ldiv (x(yz)) = [x,y]z$. Upon substituting $y\ldiv z$ for $z$, we obtain
\begin{equation}\label{Eq:aux}
    (xy)\ldiv (xz) = [x,y](y\ldiv z).
\end{equation}
Using \eqref{Eq:YB1}, the left hand side of \eqref{Eq:YB2} can be written as $[xy,[x,y]z] = ((xy)([x,y]z))\ldiv (xy) = (x(yz))\ldiv (xy)$. Upon substituting $y\ldiv z$ for $z$ into \eqref{Eq:YB2}, we therefore obtain the identity
\begin{displaymath}
    (xz)\ldiv (xy) = [x,z][y,y\ldiv z] = [x,z](z\ldiv y),
\end{displaymath}
which is \eqref{Eq:aux} with $y$ and $z$ interchanged. Hence \eqref{Eq:YB2} holds. To show that \eqref{Eq:YB3} also holds, first note that \eqref{Eq:Rump} can be written as $L_xL_y = L_{xy}L_{(xy)\ldiv x}$. Substituting $x\ldiv y$ for $y$, we obtain $L_xL_{x\ldiv y}=L_y L_{y\ldiv x}$, and taking inverses yields
\begin{equation}\label{Eq:RumpAux}
    L_{x\ldiv y}^{-1}L_x^{-1} =L_{y\ldiv x}^{-1}L_y^{-1}.
\end{equation}
We will return to \eqref{Eq:RumpAux} shortly. By \eqref{Eq:YB1}, the left hand side of \eqref{Eq:YB3} is equal to
\begin{displaymath}
    [[x,y],z] = ([x,y]z)\ldiv [x,y] = ((xy)\ldiv (x(yz)))\ldiv ((xy)\ldiv x).
\end{displaymath}
By \eqref{Eq:YB2} and \eqref{Eq:YB1}, the right hand side of \eqref{Eq:YB3} is equal to
\begin{align*}
    [[x,yz],[y,z]] &= ([x,yz][y,z])\ldiv [x,yz] = [xy,[x,y]z]\ldiv [x,yz]\\
    &= [xy,(xy)\ldiv(x(yz))]\ldiv [x,yz] = ((x(yz))\ldiv(xy))\ldiv ((x(yz))\ldiv x).
\end{align*}
Upon substituting $y\ldiv z$ for $z$, we see that \eqref{Eq:YB3} is then equivalent to
\begin{displaymath}
    ((xy)\ldiv (xz))\ldiv ((xy)\ldiv x) = ((xz)\ldiv (xy))\ldiv ((xz)\ldiv x),
\end{displaymath}
which is further equivalent, upon substitution of $x\ldiv y$ for $y$ and $x\ldiv z$ for $z$, to
\begin{displaymath}
    (y\ldiv z)\ldiv (y\ldiv x) = (z\ldiv y)\ldiv (z\ldiv x).
\end{displaymath}
But this says $L_{y\ldiv z}^{-1}L_y^{-1} = L_{z\ldiv y}^{-1}L_z^{-1}$, which is \eqref{Eq:RumpAux} with the variables renamed.

(iii) By Lemma \ref{Lm:OneOp}, $r$ is an idempotent left nondegenerate mapping if and only if $(X,\cdot)$ is a left quasigroup and $[x,y] = (xy)\ldiv (xy)$. Hence \eqref{Eq:YB1} holds if and only if \eqref{Eq:tWard} holds. Suppose that \eqref{Eq:YB1} holds. As in (ii), we obtain the equivalent identity \eqref{Eq:aux}. Usinq \eqref{Eq:YB1}, the left hand side of \eqref{Eq:YB2} can be written as $((xy)([x,y]z))\ldiv ((xy)([x,y]z)) = (x(yz))\ldiv (x(yz))$. Upon substituting $y\ldiv z$ for $z$ into \eqref{Eq:YB2}, we therefore obtain the identity
\begin{displaymath}
    (xz)\ldiv(xz) = [x,z][y,y\ldiv z] = [x,z](z\ldiv z),
\end{displaymath}
which is an instance of \eqref{Eq:aux} with $y=z$. Hence \eqref{Eq:YB2} holds. To see that \eqref{Eq:YB3} also holds, note that \eqref{Eq:tWard} is equivalent to $L_xL_y=L_{xy}L_{(xy)\ldiv(xy)}$, which is the same as $L_xL_{x\ldiv y} = L_yL_{y\ldiv y}$ and hence
\begin{equation}\label{Eq:tWardAux}
    L_{x\ldiv y}^{-1}L_x^{-1} = L_{y\ldiv y}^{-1}L_y^{-1}.
\end{equation}
Following the same series of steps as in (ii), we can rewrite \eqref{Eq:YB3} as
\begin{displaymath}
    ((xy)\ldiv (x(yz)))\ldiv ((xy)\ldiv (x(yz))) = ((x(yz))\ldiv (x(yz)))\ldiv ((x(yz))\ldiv (x(yz))),
\end{displaymath}
which is equivalent, upon substitution of $y\ldiv z$ for $z$, to
\begin{displaymath}
    ((xy)\ldiv (xz))\ldiv ((xy)\ldiv (xz)) = ((xz)\ldiv (xz))\ldiv ((xz)\ldiv (xz)),
\end{displaymath}
and hence to
\begin{displaymath}
    (y\ldiv z)\ldiv (y\ldiv z) = (z\ldiv z)\ldiv (z\ldiv z).
\end{displaymath}
But this is implied by $L_{y\ldiv z}^{-1}L_y^{-1} = L_{z\ldiv z}^{-1}L_z^{-1}$, which is \eqref{Eq:tWardAux} with the variables renamed.
\end{proof}

As has become clear from the proof of Proposition \ref{Pr:OneOp}, the identities \eqref{Eq:Rump} and \eqref{Eq:tWard} are somewhat inconvenient to work with. We will therefore employ the following syntactic trick due to Rump \cite{Rump} to arrive at simpler left quasigroup identities that correspond to the same braidings as in Proposition \ref{Pr:OneOp} (but not necessarily to the same left quasigroups).

Given a left nondegenerate mapping $r:X\times X\to X\times X$, the trick is to write
\begin{displaymath}
    r(x,y) = (x\ld\circ y, x\bullet y)
\end{displaymath}
and express the equations \eqref{Eq:YB1}--\eqref{Eq:YB3} in terms of the operations $\circ$ and $\bullet$, rather than $\ld\circ$ and $\bullet$.

\begin{proposition}\label{Pr:DivOneOp}
Let $r:X\times X\to X\times X$ be a left nondegenerate mapping written as $r(x,y) = (x\ld\circ y, x\bullet y)$. Then:
\begin{enumerate}
\item[(i)] $r$ is a derived left nondegenerate braiding if and only if $(X,\circ)$ is a left quasigroup, $x\bullet y=x$ and
\begin{equation}\label{Eq:DivRack}
    (x\circ y)\circ (x\circ z) = x\circ (y\circ z).
\end{equation}
\item[(ii)] $r$ is an involutive left nondegenerate braiding if and only if $(X,\circ)$ is a left quasigroup, $x\bullet y = (x\ld\circ y)\circ x$ and
\begin{equation}\label{Eq:DivRump}
    (x\circ y)\circ (x\circ z) = (y\circ x)\circ (y\circ z).
\end{equation}
\item[(iii)] $r$ is an idempotent left nondegenerate braiding if and only if $(X,\circ)$ is a left quasigroup, $x\bullet y = (x\ld\circ y)\circ(x\ld\circ y)$ and
\begin{equation}\label{Eq:DivtWard}
    (x\circ y)\circ (x\circ z) = (y\circ y)\circ (y\circ z).
\end{equation}
\end{enumerate}
\end{proposition}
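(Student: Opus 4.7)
The plan is to reduce each part to the corresponding part of Proposition \ref{Pr:OneOp}. Introduce an auxiliary operation by $x*y := x\ld\circ y$. By the duality recalled at the start of this section, $(X,*)$ is a left quasigroup iff $(X,\circ)$ is, and its own left division satisfies $\ld{*} = \circ$. Rewriting $r(x,y) = (x*y, x\bullet y)$ places us in the setting of Proposition \ref{Pr:OneOp}, and the expressions for $x\bullet y$ transcribe immediately: $(x*y)\ld{*} x = (x\ld\circ y)\circ x$ in the involutive case and $(x*y)\ld{*}(x*y) = (x\ld\circ y)\circ(x\ld\circ y)$ in the idempotent case.

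It remains to show that the identities \eqref{Eq:Rack}, \eqref{Eq:Rump}, \eqref{Eq:tWard} on $(X,*)$ correspond to \eqref{Eq:DivRack}, \eqref{Eq:DivRump}, \eqref{Eq:DivtWard} on $(X,\circ)$, respectively. The key observation is that the two families of left translations are mutually inverse: $L_u^{*}(y) = u*y = u\ld\circ y = (L_u^{\circ})^{-1}(y)$, so $L_u^{*} = (L_u^{\circ})^{-1}$ for every $u \in X$.

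With this in hand, each $*$-identity converts to the claimed $\circ$-identity by a uniform two-step recipe: invert both sides of the operator form of the identity, and then substitute $y \mapsto x\circ y$ so that the subterm $x*y$ collapses to $y$. For \eqref{Eq:Rack}, the operator form $L_x^{*}L_y^{*} = L_{x*y}^{*}L_x^{*}$ inverts to $L_y^{\circ}L_x^{\circ} = L_x^{\circ}L_{x*y}^{\circ}$; substituting $y\mapsto x\circ y$ yields $L_{x\circ y}^{\circ}L_x^{\circ} = L_x^{\circ}L_y^{\circ}$, which evaluated at any $z$ is \eqref{Eq:DivRack}. The identities \eqref{Eq:Rump} and \eqref{Eq:tWard} are handled in the same way, with the outer subscripts $(x*y)\ld{*}x$ and $(x*y)\ld{*}(x*y)$ turning into $(x*y)\circ x$ and $(x*y)\circ(x*y)$ and then, after the substitution, into $y\circ x$ and $y\circ y$; this gives \eqref{Eq:DivRump} and \eqref{Eq:DivtWard}. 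The reverse implications are obtained by running the substitution backwards, replacing $y$ by $x\ld\circ y$.

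I do not anticipate a real obstacle: the inversion-then-substitution procedure is precisely the content of Rump's syntactic device, and once the identification $L^{*} = (L^{\circ})^{-1}$ is in place, it produces the cleaner divisional identities mechanically. The only place where one must be attentive is tracking which occurrences of $x*y$ are inner (and thus collapse to $y$ after the substitution) and which are indices of left translations (and thus determine the outer subscript of the resulting $\circ$-identity).
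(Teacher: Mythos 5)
Your proposal is correct and follows essentially the same route as the paper: both reduce to Proposition \ref{Pr:OneOp} applied to the first-component operation $\ld\circ$, use the identification $L^{\ld\circ}_u=(L^\circ_u)^{-1}$ to put each identity in operator form, invert, and substitute $x\circ y$ for $y$ to obtain \eqref{Eq:DivRack}, \eqref{Eq:DivRump}, \eqref{Eq:DivtWard}. The computations check out in all three cases, so there is nothing to add.
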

\begin{proof}
(i) By Proposition \ref{Pr:OneOp}, $r$ is a derived left nondegenerate braiding if and only if $(X,\ld\circ)$ is a left quasigroup (equivalently, $(X,\circ)$ is a left quasigroup), $x\bullet y = x$ and
\begin{displaymath}
    x\ld\circ(y\ld\circ z) = (x\ld\circ y)\ld\circ (x\ld\circ z).
\end{displaymath}
In terms of the left translations in $(X,\circ)$, the last identity is equivalent to $L_x^{-1}L_y^{-1} = L_{x\ld\circ y}^{-1}L_x^{-1}$. Taking inverses on both sides, we obtain the equivalent identity $L_yL_x = L_x L_{x\ld\circ y}$. Substituting $x\circ y$ for $y$, we obtain $L_{x\circ y}L_x = L_xL_y$, which is \eqref{Eq:DivRack}.

(ii) By Proposition \ref{Pr:OneOp}, $r$ is an involutive left nondegenerate braiding if and only if $(X,\circ)$ is a left quasigroup, $x\bullet y = (x\ld\circ y)\circ x$ and
\begin{displaymath}
    x\ld\circ (y\ld\circ z) = (x\ld\circ y)\ld\circ (((x\ld\circ y)\circ x)\ld\circ z).
\end{displaymath}
This says $L_x^{-1}L_y^{-1} = L_{x\ld\circ y}^{-1}L_{(x\ld\circ y)\circ x}^{-1}$, which is equivalent to $L_{x\circ y}L_x = L_{y\circ x}L_y$, i.e., to \eqref{Eq:DivRump}.

(iii) By Proposition \ref{Pr:OneOp}, $r$ is an idempotent left nondegenerate braiding if and only if $(X,\circ)$ is a left quasigroup, $x\bullet y = (x\ld\circ y)\circ(x\ld\circ y)$ and
\begin{displaymath}
    x\ld\circ (y\ld\circ z) = (x\ld\circ y)\ld\circ (((x\ld\circ y)\circ(x\ld\circ y))\ld\circ z).
\end{displaymath}
This says $L_x^{-1}L_y^{-1} = L_{x\ld\circ y}^{-1}L_{(x\ld\circ y)\circ(x\ld\circ y)}^{-1}$, which is equivalent to $L_{x\circ y}L_x = L_{y\circ y}L_y$, i.e., to \eqref{Eq:DivtWard}.
\end{proof}

Recall from the introduction that a left quasigroup $(X,*)$ is a \emph{rack} (resp. \emph{Rump left quasigroup}, resp. \emph{twisted Ward left quasigroup}) if it satisfies \eqref{Eq:RACK} (resp. \eqref{Eq:RUMP}, resp. \eqref{Eq:tW}).

Part (i) of Theorem \ref{Th:Correspondences} is well-known and part (ii) can be found in \cite[Proposition 1]{Rump}.

\begin{theorem}\label{Th:Correspondences}
Let $X$ be a set. Denote a typical braiding on $X$ by $r(x,y)=(x\circ y,x\bullet y)$. Then:
\begin{enumerate}
\item[(i)] There is a one-to-one correspondence between derived left nondegenerate braidings on $X$ and racks on $X$, given by
\begin{displaymath}
    r\mapsto (X,*),\ x*y = x\ld\circ y,\quad\quad\quad (X,*)\mapsto r,\ r(x,y) = (x\ld* y, x).
\end{displaymath}
\item[(ii)] There is a one-to-one correspondence between involutive left nondegenerate braidings on $X$ and Rump left quasigroups on $X$, given by
\begin{displaymath}
    r\mapsto (X,*),\ x*y = x\ld\circ y,\quad\quad\quad (X,*)\mapsto r,\ r(x,y) = (x\ld* y, (x\ld* y)*x).
\end{displaymath}
\item[(iii)] There is a one-to-one correspondence between idempotent left nondegenerate braidings on $X$ and twisted Ward left quasigroups on $X$, given by
\begin{displaymath}
    r\mapsto (X,*),\ x*y = x\ld\circ y,\quad\quad\quad (X,*)\mapsto r,\ r(x,y) = (x\ld* y, (x\ld* y)*(x\ld* y)).
\end{displaymath}
\end{enumerate}
\end{theorem}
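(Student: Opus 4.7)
The plan is to reduce all three parts directly to Proposition \ref{Pr:DivOneOp}, exploiting the remark from the beginning of Section \ref{sec:uniform} that $\star \mapsto \ld\star$ is an involutive bijection on the set of left quasigroup operations on $X$: whenever $(X, \star)$ is a left quasigroup, so is $(X, \ld\star)$, with left division $\star$.

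First I would observe that writing $r(x, y) = (x \circ y, x \bullet y)$ as in Proposition \ref{Pr:OneOp} and setting $x * y := x \ld\circ y$ exhibits a bijection between left nondegenerate first components of $r$ and left quasigroup operations on $X$. Under this reparametrization the operation called $\circ$ in Proposition \ref{Pr:DivOneOp} (the one whose left division is the first component of $r$) coincides with our $*$. Consequently the identities \eqref{Eq:DivRack}, \eqref{Eq:DivRump}, \eqref{Eq:DivtWard} become \eqref{Eq:RACK}, \eqref{Eq:RUMP}, \eqref{Eq:tW} respectively, and the three $\bullet$-formulas of Proposition \ref{Pr:DivOneOp} become $x$, $(x \ld* y) * x$, and $(x \ld* y) * (x \ld* y)$. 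This yields, in each case, both directions of the claimed correspondence.

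It then remains to check that the two assignments are mutually inverse. Starting with $r$, the composite $r \mapsto (X, *) \mapsto r'$ produces a mapping whose first component is $x \ld* y$; since $\ld* = \circ$ by the involution property, this equals $x \circ y$, and the second component matches the original $x \bullet y$ by Lemma \ref{Lm:OneOp}. The other round trip similarly recovers $(X, *)$ from itself. The only real obstacle is notational bookkeeping: the symbol $\circ$ carries two distinct meanings in Propositions \ref{Pr:OneOp} and \ref{Pr:DivOneOp}, and some care is needed to verify that Proposition \ref{Pr:DivOneOp}'s ``$\circ$'' is precisely the theorem's ``$*$''; once this identification is in place, no further work is required.
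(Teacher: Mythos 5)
Your proposal is correct and follows essentially the same route as the paper, which simply observes that the two assignments are mutually inverse and that the rest follows from Proposition \ref{Pr:DivOneOp}; you merely make explicit the identification of Proposition \ref{Pr:DivOneOp}'s ``$\circ$'' with the theorem's ``$*$'' and the role of Lemma \ref{Lm:OneOp} in matching the second components. No gaps.
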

\begin{proof}
It is clear that in each case the two mappings are mutually inverse. The rest follows from Proposition \ref{Pr:DivOneOp}.
\end{proof}

\begin{remark}
(a) As we have shown, the identities $x\circ(y\circ z) = (x\circ y)\circ(x\circ z)$ and $x\ld\circ(y\ld\circ z) = (x\ld\circ y)\ld\circ(x\ld\circ z)$ are equivalent in the variety of left quasigroups. It is therefore customary to replace the correspondence from Theorem \ref{Th:Correspondences}(i) with the correspondence
\begin{displaymath}
    r\mapsto (X,*),\,x*y = x\circ y,\quad\quad\quad (X,*)\mapsto r,\,r(x,y) = (x*y,x).
\end{displaymath}
Our version of the correspondence for racks fits better with the uniform approach employed here.

(b) Neither of the identities \eqref{Eq:Rump} and \eqref{Eq:DivRump} implies the other in the variety of left quasigroups. Likewise, neither of the identities \eqref{Eq:tWard} and \eqref{Eq:DivtWard} implies the other in the variety of left quasigroups. Therefore, in both cases, there are two varieties of left quasigroups that can be chosen to correspond to the braidings in question.
\end{remark}

\begin{corollary}
Let $X$ be a set, $|X|\geq2$. Then every idempotent braiding on $X$ is degenerate.
\end{corollary}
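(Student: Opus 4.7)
The plan is to assume, for contradiction, that $r$ is both idempotent and nondegenerate on a set $X$ with $|X|\geq 2$, and then to deduce that the operation $\bullet$ must be independent of its first argument, defeating right nondegeneracy. By Lemma \ref{Lm:OneOp}(iii), left nondegeneracy combined with idempotence already forces
\[
    x\bullet y = \sigma(x\circ y), \qquad \sigma(z):=z\ld\circ z,
\]
so the whole problem reduces to controlling the single unary map $\sigma$, which sends $z$ to the unique element $w$ with $z\circ w=z$.

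The key input is \eqref{Eq:YB1}, which holds because $r$ is a braiding. I would substitute $z=\sigma(y)$: since $y\circ\sigma(y)=y$ by the very definition of $\sigma$, the inner product $y\circ z$ simplifies to $y$, collapsing the left-hand side to $x\circ y$. The equation becomes
\[
    x\circ y = (x\circ y)\circ\bigl((x\bullet y)\circ \sigma(y)\bigr).
\]
Applying $L_{x\circ y}^{-1}$ and using $(x\circ y)\ld\circ(x\circ y)=\sigma(x\circ y)=x\bullet y$, this rewrites as $(x\bullet y)\circ\sigma(y)=x\bullet y$, which by the defining property of $\sigma$ is equivalent to
\[
    \sigma(x\bullet y)=\sigma(y) \qquad \text{for all } x,y\in X.
\]

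To finish, I would invoke right nondegeneracy: the map $R_y^\bullet\colon x\mapsto x\bullet y$ is a bijection, hence $\{x\bullet y:x\in X\}=X$. Combined with the identity just derived, this forces $\sigma$ to take the constant value $\sigma(y)$ on all of $X$; that is, $\sigma$ is constant. Substituting back, $x\bullet y=\sigma(x\circ y)=\sigma(y)$ is independent of $x$, so $R_y^\bullet$ is a constant map on $X$, which cannot be a bijection when $|X|\geq 2$, contradicting right nondegeneracy.

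I expect the main obstacle to be identifying the correct substitution in \eqref{Eq:YB1}. The choice $z=\sigma(y)$ is motivated by the goal of eliminating the outer $\circ$ on the left so that the surviving expression is forced to encode information about $\sigma$ alone; once this substitution is noticed, every subsequent manipulation is a routine application of the definitions of $\sigma$ and left division. An alternative would be to work directly with \eqref{Eq:tWard} and substitute $z=y\ld\circ y$, obtaining the same constraint $\sigma(x\bullet y)=\sigma(y)$, but I prefer the \eqref{Eq:YB1} route since it avoids nesting $\sigma$ inside the twisted Ward identity.
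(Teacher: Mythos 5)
Your proof is correct, and it takes a genuinely different route through the middle of the argument. The paper works in the associated twisted Ward left quasigroup $(X,*)$ with $x*y=x\ld\circ y$: it first shows the squaring map $\sigma$ is surjective (from right nondegeneracy), then \emph{injective} via the identity $x\bullet(x*y)=y*y$ and right division in $\bullet$, concludes from the instance $\sigma(x*y)=\sigma(y*y)$ of \eqref{Eq:tW} that $(X,*)$ is permutational with $x*y=\sigma(y)$, and finally gets $x\bullet y=y$, killing right nondegeneracy. You instead stay entirely on the $\circ$ side and prove that $\sigma$ is \emph{constant}: the substitution $z=\sigma(y)$ into \eqref{Eq:YB1} collapses the left-hand side and, after two left divisions, yields $\sigma(x\bullet y)=\sigma(y)$, whence surjectivity of $x\mapsto x\bullet y$ forces $\sigma$ to be constant and $\bullet$ to have constant right translations. (That the two arguments reach incompatible intermediate conclusions --- $\sigma$ bijective versus $\sigma$ constant --- is no defect; both are consequences of the contradictory hypothesis.) Your version is shorter, avoids the injectivity computation and the passage to the permutational quasigroup, and uses only one substitution into \eqref{Eq:YB1} together with Lemma \ref{Lm:OneOp}(iii); what it gives up is the structural information the paper's route exposes along the way, namely that a nondegenerate idempotent braiding would have to come from a permutational left quasigroup with bijective squaring. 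Every step you wrote checks out: $y\circ\sigma(y)=y$ is exactly the left quasigroup identity \eqref{Eq:LeftQuasigroup}, and the passage from $(x\bullet y)\circ\sigma(y)=x\bullet y$ to $\sigma(x\bullet y)=\sigma(y)$ is a legitimate left division.
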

\begin{proof}
Let $r(x,y)=(x\circ y, x\bullet y)$ be an idempotent nondegenerate braiding and let $(X,*)$ be the corresponding twisted Ward left quasigroup, i.e., $x*y=x\ld\circ y$. For every $y\in X$, the right translation $x\mapsto x\bullet y = (x\ld*y)*(x\ld*y)$ is a bijection of $X$, which immediately implies that the squaring mapping $\sigma:x\mapsto x*x$ is onto $X$. We will prove that $\sigma$ is also injective. First observe that $x\bullet(x*y)=(x\ld*(x*y))*(x\ld*(x*y))=y*y$, hence $x=(y*y)\rd\bullet(x*y)$ is independent of $y$, and thus $x=(x*x)\rd\bullet(x*x)$. Consequently, if $\sigma(u)=\sigma(v)$, we have $u=(u*u)\rd\bullet(u*u)=(v*v)\rd\bullet(v*v)=v$.

Now, $\sigma(x*y) = (x*y)*(x*y)=(y*y)*(y*y) = \sigma(y*y)$ is an instance of \eqref{Eq:tW}, and since $\sigma$ is bijective, we have $x*y=\sigma(y)$ for every $x,y\in X$. But then $x\bullet y = (x\ld*y)*(x\ld*y) = \sigma(x\ld*y) = \sigma(\sigma^{-1}(y)) = y$, hence $r$ is right degenerate, a contradiction.
\end{proof}

The following are examples of twisted Ward left quasigroups.

\begin{example}
Let $(X,*)$ be an elementary abelian $2$-group. Then $(x*y)*(x*z) = y*z = (y*y)*(y*z)$, so $(X,*)$ is a twisted Ward (left) quasigroup.
\end{example}

\begin{example}\label{ex:tWq from abelian group}
Let $(X,+)$ be an abelian group, $\varphi\in\mathrm{End}(X,+)$, $\psi\in\aut{X,+}$ and $c\in X$. Define a binary operation $*$ on $X$ by
\begin{displaymath}
    x*y=\varphi(x)+\psi(y)+c.
\end{displaymath}
It is easy to check that the resulting left quasigroup $(X,*)$ satisfies \eqref{Eq:tW} if and only if
\begin{displaymath}
    \varphi\psi=\psi\varphi \quad\text{and}\quad \varphi^2+\varphi\psi=0.
\end{displaymath}
\end{example}

\begin{example}\label{Ex:Perm}
Let $x*y=f(y)$ for some bijection $f$ of $X$. Then $(X,*)$ is clearly a left quasigroup, usually called a \emph{permutational left quasigroup}. Every permutational left quasigroup satisfies \eqref{Eq:tW}.
\end{example}

\begin{lemma}
Let $(X,*)$ be a twisted Ward left quasigroup. Then the following conditions are equivalent:
\begin{enumerate}
\item[(i)] $(X,*)$ is a rack,
\item[(ii)] $(X,*)$ is a Rump left quasigroup,
\item[(iii)] $(X,*)$ is permutational.
\end{enumerate}
\end{lemma}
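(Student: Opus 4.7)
The plan is to prove the equivalences via the cycle (iii) $\Rightarrow$ (i), (ii) and then (i) $\Rightarrow$ (iii), (ii) $\Rightarrow$ (iii). The first direction is immediate from Example \ref{Ex:Perm}: if $x*y=f(y)$ for a bijection $f$ of $X$, then both sides of each of \eqref{Eq:RACK}, \eqref{Eq:RUMP}, and \eqref{Eq:tW} reduce to $f(f(z))$, so every permutational left quasigroup satisfies all three identities. Hence (iii) already implies (i) and (ii).

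For (i) $\Rightarrow$ (iii) I would assume that $(X,*)$ satisfies both \eqref{Eq:RACK} and \eqref{Eq:tW}. The two identities share the left-hand side $(x*y)*(x*z)$, so equating their right-hand sides yields
\begin{displaymath}
    x*(y*z) = (y*y)*(y*z).
\end{displaymath}
Because $L_y$ is a bijection, the element $y*z$ ranges over all of $X$ as $z$ varies, so upon setting $w=y*z$ I obtain $x*w=(y*y)*w$ for all $x,y,w\in X$. In translation notation this reads $L_x=L_{y*y}$, and fixing any $y$ shows that all left translations coincide with a single bijection $f$ of $X$, giving $x*y=f(y)$.

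For (ii) $\Rightarrow$ (iii) I would argue analogously. Assuming \eqref{Eq:RUMP} and \eqref{Eq:tW}, equating their right-hand sides gives
\begin{displaymath}
    (y*x)*(y*z) = (y*y)*(y*z).
\end{displaymath}
A first application of the bijectivity of $L_y$, letting $w=y*z$ range over $X$, yields $L_{y*x}=L_{y*y}$ for all $x,y\in X$. A second application of the bijectivity of $L_y$, this time letting $a=y*x$ range over $X$ as $x$ varies with $y$ fixed, then gives $L_a=L_{y*y}$ for every $a\in X$. Hence all left translations coincide with one common bijection $f$, and $(X,*)$ is permutational.

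I do not anticipate any genuine obstacles; the argument is purely a combination of the defining identities with the bijectivity of left translations. The only mild subtlety is the need to invoke the bijectivity of $L_y$ twice in the Rump case, once to eliminate $z$ in favor of an arbitrary $w$, and a second time so that $y*x$ sweeps through all of $X$.
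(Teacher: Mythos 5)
Your proposal is correct and follows essentially the same route as the paper: permutational implies both \eqref{Eq:RACK} and \eqref{Eq:RUMP}, and in the converse directions you equate the right-hand sides of the respective identity with \eqref{Eq:tW} and cancel via bijectivity of left translations (the paper phrases your substitution $w=y*z$ as replacing $z$ by $y\ld*z$, and likewise for $x$ in the Rump case). No gaps.
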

\begin{proof}
If $(X,*)$ is permutational then it satisfies both \eqref{Eq:RACK} and \eqref{Eq:RUMP}. If $(X,*)$ satisfies \eqref{Eq:RACK} then $x*(y*z) = (x*y)*(x*z) = (y*y)*(y*z)$ and substituting $y\ld*z$ for $z$ yields $x*z = (y*y)*z$, which means that all left translations are the same and $(X,*)$ is permutational. If $(X,*)$ satisfies \eqref{Eq:RUMP} then $(y*x)*(y*z) = (x*y)*(x*z) = (y*y)*(y*z)$ and substituting $y\ld*z$ for $z$ and $y\ld*x$ for $x$ again yields $x*z=(y*y)*z$.
\end{proof}

We will return to twisted Ward left quasigroups in Section \ref{sec:prime}.

\section{Twisted Ward quasigroups}\label{sec:tw-quasigroups}

A quasigroup $(X,*)$ is a \emph{twisted Ward quasigroup} if it satisfies the identity \eqref{Eq:tW}.

Note that in Example \ref{ex:tWq from abelian group}, $(X,*)$ is a quasigroup if and only if $\varphi\in\aut{X,+}$, in which case $(X,*)$ satisfies \eqref{Eq:tW} if and only if $\varphi=-\psi$. This motivates the following construction.

\begin{example}\label{Ex:Main}
Let $(X,\cdot)$ be a group, $\psi\in\aut{X,\cdot}$ and $c\in X$. Then $(X,*)=\mathrm{tWq}(X,\cdot,\psi,c)$ defined by
\begin{displaymath}
    x*y=c\psi(x^{-1}y)
\end{displaymath}
is a twisted Ward quasigroup. Indeed, to verify \eqref{Eq:tW}, we compute
\begin{displaymath}
    (x*y)*(x*z) = (c\psi(x^{-1}y))*(c\psi(x^{-1}z))
    = c\psi((c\psi(x^{-1}y))^{-1}c\psi(x^{-1}z))
    = c\psi(\psi(y^{-1}z)),
\end{displaymath}
which is independent of $x$ and therefore equal to $(y*y)*(y*z)$.
\end{example}

As we shall see in Theorem \ref{Th:tWard}, all twisted Ward quasigroups are of the form $\mathrm{tWq}(X,\cdot,\psi,c)$. We start by proving in two ways that every twisted Ward quasigroup is isotopic to a group. The first proof uses the quadrangle criterion known from the theory of latin squares and the second proof is based on the structure of the displacement group.

Recall that two quasigroups $(X,\cdot)$, $(Y,*)$ are \emph{isotopic} if there are bijections $\alpha,\beta,\gamma:X\to Y$ such that $\alpha(x)*\beta(y)=\gamma(x\cdot y)$ for every $x,y\in X$. Replacing $(Y,*)$ with an isomorphic copy, we can assume that $\gamma=1$ in an isotopism \cite[Theorem III.1.4]{Pfl}.

\begin{proposition}[{{\cite[p. 18]{DK} or \cite[Theorem 2.2]{E}}}]\label{p:QC}
A quasigroup $(X,*)$ is isotopic to a group if and only if it satisfies the \emph{quadrangle criterion}, i.e., for every $a_i,b_i,c_i,d_i\in X$, if $a_1*c_1=a_2*c_2$, $a_1*d_1=a_2*d_2$ and $b_1*c_1=b_2*c_2$ then $b_1*d_1=b_2*d_2$.
\end{proposition}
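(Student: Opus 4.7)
The plan is to establish the two implications separately.

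For the forward direction, assume $(X,*)$ is isotopic to a group $(G,\cdot)$ via bijections $\alpha,\beta,\gamma$ satisfying $\alpha(x)*\beta(y)=\gamma(x\cdot y)$. Applying $\gamma^{-1}$ to each hypothesis of the quadrangle criterion translates it into a group equation. Writing $A_i=\alpha^{-1}(a_i)$, $B_i=\alpha^{-1}(b_i)$, $C_i=\beta^{-1}(c_i)$, $D_i=\beta^{-1}(d_i)$, the first two hypotheses give $A_1^{-1}A_2=C_1C_2^{-1}=D_1D_2^{-1}$, while the third gives $A_1^{-1}A_2=B_1^{-1}B_2$. Chaining these, $D_1D_2^{-1}=B_1^{-1}B_2$, which translates back to $b_1*d_1=b_2*d_2$.

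For the harder direction, I will construct a group isotope via the principal loop isotope. Fix any $f\in X$, set $\alpha=R_f^{-1}$, $\beta=L_f^{-1}$, $e=f*f$, and define $x\circ y=\alpha(x)*\beta(y)$. A routine check (using $\alpha(x)*f=x$, $f*\beta(y)=y$, and the bijectivity of $L_f,R_f$) confirms that $(X,\circ)$ is a loop with two-sided identity $e$, and it is isotopic to $(X,*)$ by construction. It then remains to prove that $\circ$ is associative, for then $(X,\circ)$ is the desired group.

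To prove associativity from the quadrangle criterion, fix $x,y,z\in X$ and let $u\in X$ satisfy $u*f=\alpha(x)*\beta(y)$ and $v\in X$ satisfy $f*v=\alpha(y)*\beta(z)$. Unfolding definitions gives $(x\circ y)\circ z=u*\beta(z)$ and $x\circ(y\circ z)=\alpha(x)*v$, so the goal reduces to $u*\beta(z)=\alpha(x)*v$. I apply the quadrangle criterion with $a_1=\alpha(y)$, $a_2=f$, $b_1=u$, $b_2=\alpha(x)$, $c_1=f$, $c_2=\beta(y)$, $d_1=\beta(z)$, $d_2=v$. The three hypotheses hold by inspection: $\alpha(y)*f=y=f*\beta(y)$ by the definitions of $\alpha$ and $\beta$; $\alpha(y)*\beta(z)=f*v$ is the definition of $v$; and $u*f=\alpha(x)*\beta(y)$ is the definition of $u$. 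The conclusion $b_1*d_1=b_2*d_2$ is exactly $u*\beta(z)=\alpha(x)*v$, as required. The main obstacle is locating the correct eight-element configuration, but once $u$ and $v$ are named explicitly and one thinks of the quadrangle criterion as a ``transport of an equality between the two pairs,'' the assignment above is essentially forced by matching the defining equations of $u$ and $v$ against the loop axioms $\alpha(y)*f=f*\beta(y)=y$.
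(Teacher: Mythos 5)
The paper does not prove this proposition at all---it is quoted from the literature (D\'enes--Keedwell or Evans)---so there is no in-paper argument to compare against. Your proof is correct and complete: the forward direction correctly reduces the four hypotheses to the group identity $A_1^{-1}A_2=C_1C_2^{-1}=D_1D_2^{-1}=B_1^{-1}B_2$, and in the converse the principal isotope $x\circ y=R_f^{-1}(x)*L_f^{-1}(y)$ is indeed a loop with identity $e=f*f$, and your instantiation $a_1=\alpha(y)$, $a_2=f$, $b_1=u$, $b_2=\alpha(x)$, $c_1=f$, $c_2=\beta(y)$, $d_1=\beta(z)$, $d_2=v$ verifies all three hypotheses and yields exactly $u*\beta(z)=\alpha(x)*v$, i.e.\ associativity of $\circ$. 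This is essentially the classical textbook argument that the cited sources give.
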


\begin{lemma}\label{l:dis}
Let $(X,*)$ be a twisted Ward quasigroup and $a_1,a_2,c_1,c_2\in X$. Then:
\begin{enumerate}
	\item[(i)] The squaring map $x\mapsto x*x$ is constant.
	\item[(ii)] If $a_1*c_1=a_2*c_2$, then $L_{a_1}L_{c_1}^{-1} = L_{a_2}L_{c_2}^{-1}$.
\end{enumerate}
\end{lemma}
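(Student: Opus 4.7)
The plan is to derive both parts from a single operator-theoretic reformulation of the identity \eqref{Eq:tW}.

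For (i), I would specialize \eqref{Eq:tW} at $z=y$, which collapses it to $(x*y)*(x*y) = (y*y)*(y*y)$, i.e., $\sigma(x*y) = \sigma(y*y)$, where $\sigma$ denotes the squaring map $x\mapsto x*x$. Here the full quasigroup hypothesis matters: since $(X,*)$ is a right quasigroup, the map $x\mapsto x*y$ is a bijection of $X$ for each fixed $y$. Hence as $x$ ranges over $X$, the element $x*y$ attains every value in $X$, and so $\sigma$ is constantly equal to $\sigma(y*y)$.

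For (ii), I would rewrite \eqref{Eq:tW} as an identity of left translations. Substituting $y\ld* w$ for $z$ converts \eqref{Eq:tW} into $(x*y)*(L_xL_y^{-1}(w)) = (y*y)*w$ for all $w\in X$, which is the operator identity
\begin{displaymath}
    L_{x*y}\,L_x\,L_y^{-1} = L_{y*y},
\end{displaymath}
equivalently $L_xL_y^{-1} = L_{x*y}^{-1}L_{y*y}$. The right-hand side depends on $(x,y)$ only through the values $x*y$ and $y*y$. Combining the hypothesis $a_1*c_1=a_2*c_2$ with part (i), which gives $c_1*c_1=c_2*c_2$, we obtain
\begin{displaymath}
    L_{a_1}L_{c_1}^{-1} = L_{a_1*c_1}^{-1}L_{c_1*c_1} = L_{a_2*c_2}^{-1}L_{c_2*c_2} = L_{a_2}L_{c_2}^{-1}.
\end{displaymath}

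No serious obstacle is anticipated; the whole argument is essentially routine once \eqref{Eq:tW} is cast in the displayed operator form, after which both assertions reduce to bookkeeping. The only delicate point is invoking the full quasigroup structure (not merely left quasigroup) in (i), so that right translations are surjective and $x*y$ can be made to range over all of $X$.
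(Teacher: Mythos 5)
Your proof is correct and follows essentially the same route as the paper's: part (i) is the same substitution (surjectivity of $x\mapsto x*y$ versus substituting $x\rd* y$ for $x$ amounts to the same thing), and part (ii) rests on the same key observation that substituting $y\ld* z$ for $z$ in \eqref{Eq:tW} shows $L_xL_y^{-1}$ depends only on $x*y$ and the constant square. Your operator identity $L_xL_y^{-1}=L_{x*y}^{-1}L_{y*y}$ is in fact exactly the identity the paper itself derives later in the displacement-group lemma, so this is just a cleaner, operator-phrased version of the paper's element-wise computation.
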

\begin{proof}
(i) From \eqref{Eq:tW}, $(x*y)*(x*y)=(y*y)*(y*y)$ for every $x,y\in X$. Upon substituting $x\rd*y$ for $x$, we see that $x*x$ is independent of $x$.

(ii) Let us again write $x*y=xy$. Note that $(xx)(x(y\ldiv z)) = (yx)(y(y\ldiv z)) = (yx)z$ is an instance of \eqref{Eq:tW} and therefore $(c_2c_2)(c_2(a_2\ldiv(a_1x))) = (a_2c_2)(a_1x) = (a_1c_1)(a_1x) = (c_1c_1)(c_1x)$, using \eqref{Eq:tW} again in the last step. Canceling the unique square and dividing on the left by $c_2$, we get $a_2\ldiv(a_1x) = c_2\ldiv (c_1x)$. Substituting $c_1\ldiv x$ for $x$ and multiplying on the left by $a_2$, we finally get $a_1(c_1\ldiv x) = a_2(c_2\ldiv x)$.
\end{proof}

\begin{proposition}\label{p:dis}
Every twisted Ward quasigroup is isotopic to a group.
\end{proposition}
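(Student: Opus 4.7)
The plan is to verify the quadrangle criterion of Proposition \ref{p:QC} and conclude via that result. Suppose $a_1,a_2,b_1,b_2,c_1,c_2,d_1,d_2 \in X$ satisfy the three hypotheses $a_1*c_1 = a_2*c_2$, $a_1*d_1 = a_2*d_2$, and $b_1*c_1 = b_2*c_2$. I would immediately feed each of these into Lemma \ref{l:dis}(ii) to obtain the three operator identities
\begin{align*}
L_{a_1}L_{c_1}^{-1} &= L_{a_2}L_{c_2}^{-1}, \\
L_{a_1}L_{d_1}^{-1} &= L_{a_2}L_{d_2}^{-1}, \\
L_{b_1}L_{c_1}^{-1} &= L_{b_2}L_{c_2}^{-1}.
\end{align*}

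The next step is a purely algebraic manipulation in the symmetric group on $X$. Rewriting the first two as $L_{a_2}^{-1}L_{a_1} = L_{c_2}^{-1}L_{c_1}$ and $L_{a_2}^{-1}L_{a_1} = L_{d_2}^{-1}L_{d_1}$ and combining gives $L_{c_2}^{-1}L_{c_1} = L_{d_2}^{-1}L_{d_1}$. Rewriting the third as $L_{b_2}^{-1}L_{b_1} = L_{c_2}^{-1}L_{c_1}$ and combining yields $L_{b_2}^{-1}L_{b_1} = L_{d_2}^{-1}L_{d_1}$, equivalently
\begin{displaymath}
L_{b_1}L_{d_1}^{-1} = L_{b_2}L_{d_2}^{-1}.
\end{displaymath}

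Finally, to convert this operator identity back into an element identity, I would evaluate both sides at the common square $s := d_1*d_1 = d_2*d_2$, which is a single element of $X$ by Lemma \ref{l:dis}(i). Since $L_{d_i}^{-1}(s) = d_i$, the left-hand side becomes $b_1*d_1$ and the right-hand side becomes $b_2*d_2$, establishing $b_1*d_1 = b_2*d_2$. This confirms the quadrangle criterion and completes the proof.

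The only real subtlety is the last step: one must remember that an identity of left translations yields an equality of elements only after evaluation, and that the constant square provided by Lemma \ref{l:dis}(i) is exactly the input needed to cancel the $L_{d_i}^{-1}$ factors uniformly for $i=1,2$. Beyond that, the argument is a short symbol-pushing routine in the permutation group generated by the left translations.
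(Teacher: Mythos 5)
Your proof is correct and follows essentially the same route as the paper: both verify the quadrangle criterion of Proposition \ref{p:QC} by converting the hypothesized equalities into operator identities via Lemma \ref{l:dis}(ii). The only cosmetic difference is that you manipulate all three identities in the symmetric group and then evaluate at the constant square supplied by Lemma \ref{l:dis}(i), whereas the paper applies Lemma \ref{l:dis}(ii) to just two of the hypotheses and carries out the equivalent computation at the level of elements using left division.
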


\begin{proof}
Suppose that $(X,\cdot)$ is a twisted Ward quasigroup and the assumptions of the quadrangle criterion are satisfied.
Applying Lemma \ref{l:dis}(ii) to equalities $a_1c_1=a_2c_2$ and $b_1c_1=b_2c_2$, we get $d_2 = a_2\ldiv(a_2d_2) = a_2\ldiv(a_1d_1) = a_2\ldiv(a_1(c_1\ldiv(c_1d_1))) = a_2\ldiv (a_2(c_2\ldiv(c_1d_1))) = c_2\ldiv(c_1d_1)$ and thus $b_2d_2 = b_2(c_2\ldiv(c_1d_1)) = b_1(c_1\ldiv (c_1d_1)) = b_1d_1$. We are done by Proposition \ref{p:QC}.
\end{proof}

The \emph{left multiplication group} of a quasigroup $(X,*)$ is the permutation group generated by all left translations, i.e.,
\begin{displaymath}
    \lmlt X=\langle L_x: x\in X\rangle.
\end{displaymath}
As in \cite{BKSV}, we define the \emph{positive} (resp. \emph{negative}) \emph{displacement group} of $(X,*)$ as the subgroup of $\lmlt X$ generated by all positive (resp. negative) displacements, that is,
\begin{displaymath}
    \disp X=\langle L_xL_y^{-1} : x,y\in X\rangle, \quad\quad\quad \disn X=\langle L_x^{-1}L_y : x,y\in X\rangle.
\end{displaymath}
The \emph{displacement group} of $(X,*)$ is then the group
\begin{displaymath}
    \dis X=\langle L_xL_y^{-1}, L_x^{-1}L_y : x,y\in X\rangle.
\end{displaymath}
Note that $\disp X=\langle L_eL_x^{-1} : x\in X\rangle$ for any fixed $e\in X$ since $L_xL_y^{-1}=(L_eL_x^{-1})^{-1}(L_eL_y^{-1})$, and $\disn X=\langle L_x^{-1}L_e :\ x\in X\rangle$  since $L_x^{-1}L_y = (L_x^{-1}L_e)(L_y^{-1}L_e)^{-1}$.

A permutation group $G$ acts \emph{regularly} on a set $X$ if for every $x,y\in X$ there is a unique $g\in G$ such that $g(x)=y$. Recall the following result of Dr\'apal:

\begin{proposition}\cite[Proposition 5.2]{Dra}\label{Pr:isotope}
A quasigroup $X$ is isotopic to a group if and only if $\disp{X}$ acts regularly on $X$. In such a case, $X$ is isotopic to $\disp{X}$.
\end{proposition}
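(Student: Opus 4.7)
The plan is to prove both directions separately. The forward direction will come from a transport-of-structure calculation; the converse, which is the real content, will require building an explicit isotopism from the regular action.

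For the forward direction I would start from an isotopism $\gamma(x*y)=\alpha(x)\cdot\beta(y)$ onto a group $(G,\cdot)$ and compute the positive displacements. A short calculation yields
\[
L_x L_y^{-1} = \gamma^{-1}\lambda_{\alpha(x)\alpha(y)^{-1}}\gamma,
\]
where $\lambda_g$ denotes left multiplication by $g$ in $G$. Since $\alpha$ is a bijection, the elements $\alpha(x)\alpha(y)^{-1}$ exhaust $G$, so the set $\{L_xL_y^{-1}:x,y\in X\}$ is already a subgroup of $\mathrm{Sym}(X)$, $\gamma$-conjugate to the left-regular representation of $G$. In particular it coincides with $\disp{X}$, and since conjugation preserves regularity of the action, $\disp{X}$ acts regularly on $X$.

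For the converse I would fix $e\in X$, observe that the orbit map $\phi\colon\disp{X}\to X$, $d\mapsto d(e)$, is a bijection (by regularity), and transport the group structure to obtain a group $(X,\cdot)$ isomorphic to $\disp{X}$. This group satisfies the convenient identity $u\cdot v=\phi^{-1}(u)(v)$. The task then reduces to producing bijections $\alpha,\beta\colon X\to X$ with $x*y=\alpha(x)\cdot\beta(y)$. I would take $\beta=L_e$ and $\alpha=R_{e\ld* e}$, so that $\alpha(x)=L_xL_e^{-1}(e)$ and $\phi^{-1}(\alpha(x))=L_xL_e^{-1}$ (by regularity and the fact that this positive displacement sends $e$ to $\alpha(x)$); the desired identity
\[
\alpha(x)\cdot\beta(y)=L_xL_e^{-1}(e*y)=L_x(y)=x*y
\]
then follows immediately.

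I expect the main obstacle to be guessing the correct $\alpha$ and $\beta$. The constraint $x*y=\alpha(x)\cdot\beta(y)$ unwinds via $u\cdot v=\phi^{-1}(u)(v)$ into $L_x=\phi^{-1}(\alpha(x))\circ\beta$, which forces $L_x\circ\beta^{-1}$ to be a positive displacement for every $x$. The natural way to satisfy this is to pick $\beta=L_e$, so that $L_x\circ\beta^{-1}=L_xL_e^{-1}$ is automatically a generator of $\disp{X}$; $\alpha$ is then determined. Once these choices are in place, both directions reduce to a short unwinding of definitions.
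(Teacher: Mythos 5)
Your proof is correct, and since the paper quotes this result from Dr\'apal \cite[Proposition 5.2]{Dra} without reproducing an argument, there is no internal proof to compare it against. Both directions check out: the computation $L_xL_y^{-1}=\gamma^{-1}\lambda_{\alpha(x)\alpha(y)^{-1}}\gamma$ shows the positive displacements already form a group conjugate to the left regular representation of $G$, hence regular and equal to $\disp{X}$; and in the converse the transported multiplication $u\cdot v=\phi^{-1}(u)(v)$ with $\beta=L_e$ and $\alpha=R_{e\ld* e}$ gives $\alpha(x)\cdot\beta(y)=L_xL_e^{-1}(e*y)=x*y$, which simultaneously yields the final clause that $X$ is isotopic to $\disp{X}$.
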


It follows from Propositions \ref{p:dis} and \ref{Pr:isotope} that every twisted Ward quasigroup $X$ is isotopic to the group $\disp{X}$, which acts regularly on $X$. Here is an alternative proof of this fact which does not refer to Propositions \ref{p:QC} and \ref{Pr:isotope}.

\begin{lemma}
Let $X=(X,*)$ be a twisted Ward quasigroup and let $e$ denote the unique square in $X$. Then:
\begin{enumerate}
\item[(i)] $\dis{X}=\disp{X}=\disn{X}$.
\item[(ii)] $(L_x^{-1}L_e)(L_y^{-1}L_e) = L_{(x\rd*e)*(e\ld*y)}^{-1}L_e$ and $(L_x^{-1}L_e)^{-1} = L_{(e\ld*x)*e}^{-1}L_e$ for every $x,y\in X$.
\item[(iii)] $\dis{X}$ is equal to $\{L_x^{-1}L_e:x\in X\}$ and is isomorphic to the group isotope $(X,\diamond)$, where $x\diamond y = (x\rd*e)*(e\ld*y)$.
\end{enumerate}
\end{lemma}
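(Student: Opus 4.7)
The overall plan is that (ii) does most of the work: it provides an explicit composition rule and an inversion rule for the set $H = \{L_x^{-1}L_e : x\in X\}$, which forces $H$ to be a subgroup and immediately yields (iii); one additional identity obtained from \eqref{Eq:tW} then bridges $\disp{X}$ and $\disn{X}$ to establish (i). Throughout, the main technical tool is the forward direction of Lemma \ref{l:dis}(ii), which converts equalities of products into operator identities.

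For (ii), both equations reduce, after routine rearrangement of operators, to instances of \eqref{Eq:tW}. The first claim $(L_x^{-1}L_e)(L_y^{-1}L_e) = L_z^{-1}L_e$ with $z = (x\rd*e)*(e\ld*y)$ is equivalent to $L_eL_y^{-1} = L_xL_z^{-1}$, so by Lemma \ref{l:dis}(ii) it suffices to check $x*z = e*y$. Writing $a = x\rd*e$ and $b = e\ld*y$, so that $a*e = x$ and $e*b = y$, this reduces to $(a*e)*(a*b) = e*(e*b)$, which is \eqref{Eq:tW} applied to $(a,e,b)$ together with $e*e = e$. The second claim $(L_x^{-1}L_e)^{-1} = L_w^{-1}L_e$ with $w = (e\ld*x)*e$ rearranges to $L_xL_e^{-1} = L_eL_w^{-1}$, so by Lemma \ref{l:dis}(ii) it suffices to check $x*e = e*w$. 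Writing $b = e\ld*x$, this reduces to $(e*b)*e = e*(b*e)$, which is \eqref{Eq:tW} applied to $(e,b,e)$ together with $e*e = e$ and $b*b = e$.

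Once (ii) is in hand, $H$ contains the identity (take $x=e$) and is closed under multiplication and inversion, so it is a subgroup of $\mathrm{Sym}(X)$. Since $H$ generates $\disn{X}$, we conclude $\disn{X} = H$. The map $\Phi : X \to H$, $x \mapsto L_x^{-1}L_e$, is a bijection (injectivity from $L_x = L_y \Rightarrow x = y$ in a quasigroup), and the first equation of (ii) is precisely $\Phi(x\diamond y) = \Phi(x)\Phi(y)$; pulling the group structure on $H$ back through $\Phi$ makes $(X,\diamond)$ a group and $\Phi$ an isomorphism, which gives (iii) as soon as we know $H = \dis{X}$.

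The remaining content of (i) is $\disp{X} = \disn{X}$. The extra ingredient is \eqref{Eq:tW} with $x = e$: $(e*y)*(e*z) = e*(y*z)$, which says $L_{e*y}L_e = L_eL_y$, i.e., conjugation by $L_e$ sends $L_y$ to $L_{e*y}$. Inverting gives $L_eL_y^{-1} = L_{e*y}^{-1}L_e \in H$, hence $\disp{X} \subseteq H = \disn{X}$; substituting $e\ld*y$ for $y$ and rearranging the same identity gives $L_y^{-1}L_e = L_eL_{e\ld*y}^{-1} \in \disp{X}$, hence $H \subseteq \disp{X}$. Combining yields $\dis{X} = \disp{X} = \disn{X} = H$, completing (i) and finishing (iii). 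The main obstacle is spotting the conjugation identity $L_eL_y = L_{e*y}L_e$ and the two specific \eqref{Eq:tW} instances needed in (ii); once these are in place, the rest is bookkeeping with Lemma \ref{l:dis}(ii).
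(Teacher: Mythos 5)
Your proof is correct, and it follows the same overall skeleton as the paper's (operator identities derived from \eqref{Eq:tW}, then part (ii) forcing $\{L_x^{-1}L_e:x\in X\}$ to be a group), but the computational engine for part (ii) is genuinely different. The paper first extracts from \eqref{Eq:tW} the operator identity $L_xL_{x\ld*y}^{-1}=L_y^{-1}L_e$ and proves both formulas in (ii) by chaining several applications of it; you instead invoke the forward direction of Lemma~\ref{l:dis}(ii) to reduce each operator identity to a single element-level equation ($x*z=e*y$ for the product formula and $x*e=e*w$ for the inverse formula), and then verify each of these by one instance of \eqref{Eq:tW} together with $u*u=e$. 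Both reductions check out: for the product, $(a*e)*(a*b)=(e*e)*(e*b)=e*(e*b)$ with $a=x\rd*e$, $b=e\ld*y$; for the inverse, $(e*b)*(e*e)=(b*b)*(b*e)$ gives $(e*b)*e=e*(b*e)$. Your argument for (i) is essentially the paper's identity $L_xL_y^{-1}=L_{x*y}^{-1}L_e$ specialized at $x=e$, together with the substitution $y\mapsto e\ld*y$, and your logical order (prove (ii) first, then (i) and (iii)) is a harmless reorganization. What your route buys is that the verification in (ii) becomes a pointwise check against \eqref{Eq:tW} rather than a longer operator computation, at the cost of leaning on the earlier Lemma~\ref{l:dis}(ii); the paper's version is self-contained within the lemma and produces the identity $L_xL_{x\ld*y}^{-1}=L_y^{-1}L_e$ as a reusable byproduct.
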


\begin{proof}
(i) The identity \eqref{Eq:tW} says $L_{x*y}L_x = L_{y*y}L_y = L_eL_y$ and hence is equivalent to $L_xL_y^{-1}=L_{x*y}^{-1}L_e$, which shows that $\disp X\le \disn X$. Replacing $y$ with $x\ld*y$, we obtain the identity
\begin{equation}\label{Eq:dis}
    L_xL_{x\ld*y}^{-1}=L_y^{-1}L_e,
\end{equation}
which implies $\disn X\le \disp X$. Hence $\dis X = \disp X = \disn X$.

(ii) Fix $x,y\in X$ and let $u=x\rd*e$ so that $u\ld*x = e$. By a repeated application of \eqref{Eq:dis}, we have
\begin{align*}
    (L_x^{-1}L_e)(L_y^{-1}L_e) &= (L_uL_{u\ld*x}^{-1})(L_eL_{e\ld*y}^{-1}) = L_u(L_{u\ld*x}^{-1}L_e)L_{e\ld*y}^{-1}\\
    &= L_uL_{e\ld*y}^{-1} = L_uL_{u\ld*(u*(e\ld*y))}^{-1} = L_{u*(e\ld*y)}^{-1}L_e = L_{(x\rd*e)*(e\ld*y)}^{-1}L_e.
\end{align*}
Using \eqref{Eq:dis} again, we also have
\begin{displaymath}
    (L_x^{-1}L_e)^{-1} = (L_eL_{e\ld*x}^{-1})^{-1} = L_{e\ld*x}L_e^{-1} = L_{e\ld*x}L_{(e\ld*x)\ld*((e\ld*x)*e)}^{-1} = L_{(e\ld*x)*e}^{-1}L_e.
\end{displaymath}

(iii) Part (ii) proves that $\dis X =\disn X$ is equal to $\{L_x^{-1}L_e:x\in X\}$ and is isomorphic to $(X,\diamond)$, where $x\diamond y = (x\rd*e)*(e\ld*y)$, which therefore has to be a group. Clearly, $(X,\diamond)$ is isotopic to $(X,*)$.
\end{proof}

We proceed to describe all twisted Ward quasigroups.

\begin{theorem}\label{Th:tWard}
Let $(X,*)$ be a quasigroup. Then $(X,*)$ is a twisted Ward quasigroup if and only if there is a group $(X,\cdot)$, $\psi\in\aut{X,\cdot}$ and $c\in X$ such that
$x*y=c\psi(x^{-1}y)$ for every $x,y\in X$.
\end{theorem}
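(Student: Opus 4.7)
The converse direction is already verified by Example \ref{Ex:Main}, so I focus on the forward direction. The plan is to take the group $(X, \diamond)$ provided by the preceding lemma as $(X, \cdot)$, use its identity element $e$ (the unique square of $(X, *)$) as the constant $c$, and set $\psi := L_e$. Inverting the isotopism $x \diamond y = (x\rd*e) * (e\ld*y)$ via the bijections $R_e$ and $L_e$ gives the dual formula
\begin{displaymath}
    x * y = R_e(x) \diamond L_e(y) = (x*e) \diamond (e*y),
\end{displaymath}
which will be the starting point of the calculation.

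Next I would observe that $R_e(x)$ is the inverse of $L_e(x)$ in $(X, \diamond)$, via the direct computation
\begin{displaymath}
    (x*e) \diamond (e*x) = ((x*e)\rd*e) * (e\ld*(e*x)) = x * x = e.
\end{displaymath}
Substituting into the previous display and writing $\psi := L_e$ gives $x * y = \psi(x)^{-1} \diamond \psi(y)$, where the inverse is taken in the group $(X, \diamond)$.

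The main step is to show that $\psi = L_e$ is an automorphism of $(X, \diamond)$. I plan to feed the formula $a*b = \psi(a)^{-1} \diamond \psi(b)$ into the twisted Ward identity \eqref{Eq:tW}. Because $\psi(y*y) = \psi(e) = e$, the right-hand side collapses to $\psi(y*z)$, so \eqref{Eq:tW} reduces to $\psi(x*y)^{-1} \diamond \psi(x*z) = \psi(y*z)$. Setting $a := \psi(x)^{-1}$, $v := \psi(y)$, $w := \psi(z)$ (which range independently over all of $X$ since $\psi$ is a bijection) and expanding each embedded $*$ once more, this becomes
\begin{displaymath}
    \psi(a \diamond v)^{-1} \diamond \psi(a \diamond w) = \psi(v^{-1} \diamond w)
\end{displaymath}
for all $a, v, w \in X$. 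Setting $v = e$ immediately yields $\psi(a \diamond w) = \psi(a) \diamond \psi(w)$, so $\psi$ is a bijective homomorphism and hence an automorphism.

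Putting everything together, $x * y = \psi(x)^{-1} \diamond \psi(y) = \psi(x^{-1} \diamond y)$ in the group $(X, \diamond)$, which matches $c \psi(x^{-1} y)$ with $c = e$. I expect the third paragraph — the verification that $\psi$ preserves $\diamond$ — to be the main obstacle: this is where the full strength of \eqref{Eq:tW} is used, and the reparameterization must be set up carefully. Everything else reduces cleanly to the definition of $\diamond$ and the uniform-square identity $x * x = e$ from Lemma \ref{l:dis}(i).
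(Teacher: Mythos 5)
Your proof is correct, but it takes a genuinely different route from the paper's. The paper proves the forward direction by invoking Proposition \ref{p:dis} to obtain an abstract isotopy $x*y=\varphi(x)\psi(y)$ with a group $(X,\cdot)$ and unconstrained bijections $\varphi,\psi$ normalized so that $\psi(1)=1$; it then substitutes this into \eqref{Eq:tW} twice, first to extract the automorphism property of $\psi$ (via the substitutions $z=1$ and $z=\varphi(x)^{-1}$) and second to derive $\varphi(x)\psi(x)=c$, which pins down $\varphi$. You instead start from the concrete group $(X,\diamond)$, $x\diamond y=(x\rd*e)*(e\ld*y)$, supplied by the unnumbered displacement-group lemma, so your isotopy components are known from the outset to be $R_e$ and $L_e$; the observation $(x*e)\diamond(e*x)=x*x=e$ then identifies $R_e$ as group inversion composed with $L_e$, which replaces the paper's second use of \eqref{Eq:tW} entirely, leaving only the automorphism verification for $\psi=L_e$. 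That verification is sound: the reduction of \eqref{Eq:tW} to $\psi(a\diamond v)^{-1}\diamond\psi(a\diamond w)=\psi(v^{-1}\diamond w)$ is valid (the right-hand side collapses because $y*y=e$ and $\psi(e)=e*e=e$ by Lemma \ref{l:dis}(i)), the parameters $a,v,w$ do range independently over $X$, and setting $v=e$ gives the homomorphism property. The one step you should make explicit is that $e$ is in fact the identity of $(X,\diamond)$ --- it follows in one line from $e*e=e$ (whence $e\rd*e=e=e\ld*e$), or from the isomorphism with $\disn{X}$ sending $L_x^{-1}L_e\mapsto x$. What your approach buys is economy: the constant comes out as $c=1$ for free and the normalization step disappears. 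What the paper's buys is independence from the displacement-group machinery, since it needs only the quadrangle criterion of Proposition \ref{p:QC}.
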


\begin{proof}
We have verified the converse implication in Example \ref{Ex:Main}. For the direct implication, suppose that $(X,*)$ is a twisted Ward quasigroup. By Proposition \ref{p:dis}, $(X,*)$ is isotopic to a group $(X,\cdot)$, i.e., there are permutations $\varphi$, $\psi$ of $X$  such that $x*y=\varphi(x)\psi(y)$ for all $x,y\in X$. We may assume without loss of generality that $\psi(1) = 1$, otherwise set $\bar{\varphi}(x) = \varphi(x)\psi(1)$ and $\bar{\psi}(y) = \psi(1)^{-1}\psi(y)$ to obtain $x*y = \bar{\varphi}(x)\bar{\psi}(y)$ and $\bar\psi(1)=1$.

Writing \eqref{Eq:tW} in terms of $\cdot$, $\varphi$ and $\psi$, and replacing $z$ with $\psi^{-1}(z)$, we have
\begin{displaymath}
    \varphi(\varphi(x)\psi(y))\cdot \psi(\varphi(x) z) = \varphi(\varphi(y)\psi(y))\cdot \psi(\varphi(y) z)
\end{displaymath}
for all $x,y,z\in X$. Rearranging this, we have
\begin{displaymath}
    \varphi(\varphi(y) \psi(y))^{-1}\cdot \varphi(\varphi(x) \psi(y)) = \psi(\varphi(y) z)\cdot \psi(\varphi(x) z)^{-1}.
\end{displaymath}
Note that the left hand side is independent of $z$. Comparing the right hand sides upon substituting $z=1$ and $z=\varphi(x)^{-1}$, respectively, we obtain
\begin{displaymath}
    \psi(\varphi(y)) \cdot \psi(\varphi(x))^{-1} = \psi(\varphi(y) \varphi(x)^{-1})\cdot\psi(\varphi(x)\varphi(x)^{-1}) =\psi(\varphi(y)\varphi(x)^{-1})
\end{displaymath}
for all $x,y\in X$, where we have used $\psi(1)=1$. Since $\varphi$ is bijective, this is equivalent to
\begin{displaymath}
    \psi(y)\psi(x)^{-1} = \psi(yx^{-1})
\end{displaymath}
for all $x,y\in X$, and thus $\psi$ is an automorphism of the group $(X,\cdot)$.
	
Writing \eqref{Eq:tW} in terms of $\cdot$, $\varphi$ and $\psi$ again and substituting $y=z=1$, we have
\begin{displaymath}
    \varphi(\varphi(x))\cdot \psi(\varphi(x)) = \varphi(\varphi(1))\cdot \psi(\varphi(1))
\end{displaymath}
for every $x\in X$, with the right hand side being constant, say equal to $c$. Since $\varphi$ is bijective, this is equivalent to $\varphi(x)\psi(x)=c$. Then $\varphi(x)=c\psi(x)^{-1}$ and $x*y=c\psi(x)^{-1}\psi(y)=c\psi(x^{-1}y)$ for every $x$, $y\in X$.	
\end{proof}

Since isotopic groups are isomorphic \cite[Corollary III.2.3]{Pfl}, the following result solves the isomorphism problem for twisted Ward quasigroups.

\begin{proposition}\label{Pr:tWard_iso}
Let $(X,\cdot)$ be a group, $\varphi,\psi\in\aut{X,\cdot}$ and $c\in X$. Then:
\begin{enumerate}
\item[(i)] The mapping $x\mapsto cx$ is an isomorphism $\mathrm{tWq}(X,\cdot,\varphi,1)\to\mathrm{tWq}(X,\cdot,\varphi,c)$.
\item[(ii)] The twisted Ward quasigroups $\mathrm{tWq}(X,\cdot,\varphi,1)$ and $\mathrm{tWq}(X,\cdot,\psi,1)$ are isomorphic if and only if $\varphi$, $\psi$ are conjugate in $\aut{X,\cdot}$.
\end{enumerate}
\end{proposition}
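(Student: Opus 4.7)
My plan is to handle the two parts separately, with (i) a direct one-line verification and (ii) split into the easy and hard directions.

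For part (i), writing $*_c$ for the operation of $\mathrm{tWq}(X,\cdot,\varphi,c)$ and $f(x)=cx$, both sides of the homomorphism condition unwind immediately: $f(x*_1 y) = c\varphi(x^{-1}y)$, while $f(x)*_c f(y) = c\varphi((cx)^{-1}(cy)) = c\varphi(x^{-1}y)$. So $f$ is a homomorphism, and it is visibly a bijection.

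For part (ii), the ``if'' direction is straightforward. Suppose $\psi = \alpha\varphi\alpha^{-1}$ with $\alpha\in\aut{X,\cdot}$. Take $f=\alpha$ and use that $\alpha$ preserves inverses and products:
\begin{displaymath}
    f(x*_\varphi y) = \alpha(\varphi(x^{-1}y)) = \psi(\alpha(x^{-1}y)) = \psi(\alpha(x)^{-1}\alpha(y)) = f(x)*_\psi f(y).
\end{displaymath}

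The ``only if'' direction is where the real work lies. Suppose $f:\mathrm{tWq}(X,\cdot,\varphi,1)\to\mathrm{tWq}(X,\cdot,\psi,1)$ is an isomorphism of quasigroups. The first step is to pin down $f(1)$. By Lemma \ref{l:dis}(i) the squaring map is constant in both quasigroups, and in each case the common square is $\varphi(1)=1$ and $\psi(1)=1$ respectively; since any quasigroup isomorphism must send squares to squares, we get $f(1)=1$. Plugging $x=1$ into the homomorphism equation $f(\varphi(x^{-1}y))=\psi(f(x)^{-1}f(y))$ then yields $f\varphi = \psi f$, so at the level of bijections we already have $\psi = f\varphi f^{-1}$. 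The remaining and main obstacle is to upgrade this to a conjugation inside $\aut{X,\cdot}$, i.e.\ to show $f$ is a group automorphism. Using $f\varphi=\psi f$ on the left of the homomorphism equation and cancelling $\psi$ (which is bijective) gives $f(x^{-1}y) = f(x)^{-1}f(y)$. Setting $y=1$ yields $f(x^{-1})=f(x)^{-1}$, and substituting $x^{-1}$ for $x$ in the previous identity gives $f(xy)=f(x)f(y)$. Hence $f\in\aut{X,\cdot}$ and $\psi = f\varphi f^{-1}$, as required.
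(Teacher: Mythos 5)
Your proof is correct and follows essentially the same route as the paper's: a direct computation for (i), conjugation by $\alpha$ for the ``if'' direction of (ii), and for the ``only if'' direction the substitution $x=1$ into the homomorphism equation to get $f\varphi=\psi f$, followed by cancelling $\psi$ to conclude $f\in\aut{X,\cdot}$. Your explicit justification that $f(1)=1$ (via the constant squaring map) fills in a step the paper leaves implicit, but the argument is otherwise identical.
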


\begin{proof}
Let us denote the multiplication in $\mathrm{tWq}(X,\cdot,\varphi,c)$ by $*_{\varphi,c}$.

(i) For every $x,y\in X$, we have $(cx)*_{\varphi,c}(cy)=c\varphi((cx)^{-1}(cy))=c\varphi(x^{-1}y)=c(x*_{\varphi,1}y)$.

(ii) If $\psi=\rho\varphi\rho^{-1}$ for some $\rho\in\aut{X,\cdot}$, then
\begin{displaymath}
    \rho(x*_{\varphi,1}y)=\rho\varphi(x^{-1}y)=\psi\rho(x^{-1}y)=\psi(\rho(x)^{-1}\rho(y))=\rho(x)*_{\psi,1}\rho(y).
\end{displaymath}
Conversely, if $\rho$ is an isomorphism $\mathrm{tWq}(X,\cdot,\varphi,1)\to \mathrm{tWq}(X,\cdot,\psi,1)$ then
\begin{equation}\label{Eq:AuxIso}
    \rho\varphi(x^{-1}y)=\psi(\rho(x)^{-1}\rho(y))
\end{equation}
for every $x,y\in X$. Upon substituting $x=1$ into \eqref{Eq:AuxIso} we obtain $\rho\varphi=\psi\rho$. Applying $\psi^{-1}$ to both sides of \eqref{Eq:AuxIso}, we then get $\rho(x^{-1}y)=\rho(x)^{-1}\rho(y)$, so $\rho\in\aut{X,\cdot}$.
\end{proof}

For a group $G$, let $cc(G)$ denote the number of conjugacy classes of $G$.

\begin{corollary}\label{Cr:NtWq}
The number of twisted Ward quasigroups of order $n$ up to isomorphism is
\begin{displaymath}
    q(n) = \sum_{G} cc(\aut{G}),
\end{displaymath}
where the summation runs over all groups $G$ of order $n$ up to isomorphism.
\end{corollary}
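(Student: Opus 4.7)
The plan is to assemble the corollary from the three pieces already established: the representation theorem (Theorem \ref{Th:tWard}), the two parts of Proposition \ref{Pr:tWard_iso}, and the cited fact that isotopic groups are isomorphic. First, by Theorem \ref{Th:tWard}, every twisted Ward quasigroup of order $n$ arises as $\mathrm{tWq}(G,\cdot,\psi,c)$ for some group $(G,\cdot)$ of order $n$, some $\psi\in\aut{G,\cdot}$, and some $c\in G$; in particular, the map
\begin{displaymath}
    (G,\psi,c)\;\longmapsto\;\mathrm{tWq}(G,\cdot,\psi,c)
\end{displaymath}
is surjective onto the set of isomorphism types of twisted Ward quasigroups of order $n$.

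Next, I would use Proposition \ref{Pr:tWard_iso}(i) to reduce to the case $c=1$: left translation by $c$ gives an isomorphism $\mathrm{tWq}(G,\cdot,\psi,1)\to\mathrm{tWq}(G,\cdot,\psi,c)$, so we only need to count pairs $(G,\psi)$ with $c$ fixed to $1$. Then I would separate the counting by the underlying group. If $\mathrm{tWq}(G_1,\cdot,\varphi,1)\cong \mathrm{tWq}(G_2,\cdot,\psi,1)$, both quasigroups are isotopic to $G_1$ and $G_2$ respectively (by construction, or by Proposition \ref{p:dis}), so by \cite[Corollary III.2.3]{Pfl} (isotopic groups are isomorphic) we conclude $G_1\cong G_2$. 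Thus twisted Ward quasigroups with non-isomorphic underlying groups are non-isomorphic, which partitions the count by isomorphism types of groups $G$ of order $n$.

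Finally, for a fixed group $G$ of order $n$, Proposition \ref{Pr:tWard_iso}(ii) states that $\mathrm{tWq}(G,\cdot,\varphi,1)\cong \mathrm{tWq}(G,\cdot,\psi,1)$ if and only if $\varphi$ and $\psi$ are conjugate in $\aut{G,\cdot}$. Consequently, the isomorphism classes of twisted Ward quasigroups of order $n$ whose associated group is $G$ are in bijection with the conjugacy classes of $\aut{G,\cdot}$, contributing $cc(\aut{G})$ to the total. Summing over isomorphism types of groups $G$ of order $n$ yields the stated formula.

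There is no real obstacle here; the whole proof is a bookkeeping exercise. The only point that requires a small justification is that different underlying groups cannot produce isomorphic twisted Ward quasigroups, and this is handled by invoking the isotopic-groups-are-isomorphic theorem together with Proposition \ref{p:dis}.
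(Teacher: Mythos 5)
Your proposal is correct and follows exactly the route the paper intends: the corollary is stated as an immediate consequence of Theorem \ref{Th:tWard} and Proposition \ref{Pr:tWard_iso}, with the remark preceding the proposition (isotopic groups are isomorphic) supplying precisely the point you identify as needing justification, namely that non-isomorphic underlying groups yield non-isomorphic twisted Ward quasigroups. Your write-up just makes the paper's implicit bookkeeping explicit.
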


Returning to braidings, we deduce:

\begin{theorem}\label{Th:main}
Let $r:X\times X\to X\times X$ be given by $r(x,y)=(x\circ y,x\bullet y)$. Then $r$ is an idempotent latin braiding if and only if there is a group $(X,\cdot)$, an automorphism $\varphi\in\aut{X,\cdot}$ and $c\in X$ such that
\begin{displaymath}
    r(x,y)=(x\varphi(c)^{-1}\varphi(y),c).
\end{displaymath}
Moreover, up to isomorphism, we can take $c=1$ and $\varphi$ up to conjugation in $\aut{X,\cdot}$.
\end{theorem}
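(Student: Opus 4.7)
The plan is to combine the correspondence of Theorem \ref{Th:Correspondences}(iii), the representation of Theorem \ref{Th:tWard}, and the isomorphism classification of Proposition \ref{Pr:tWard_iso}. First I would verify that, under the bijection of Theorem \ref{Th:Correspondences}(iii), a left nondegenerate braiding $r$ is latin (i.e., $(X,\circ)$ is a quasigroup) if and only if the associated twisted Ward left quasigroup $(X,*)$, with $x*y = x\ld\circ y$, is a two-sided quasigroup. The left translations are already mutually inverse, so only the right translations need attention: the map $R^*_y : x\mapsto x\ld\circ y$ is a bijection iff for every $y,z$ there is a unique $x$ with $x\circ z=y$, which is exactly the statement that $R^\circ_z$ is a bijection. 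Hence idempotent latin braidings on $X$ are in bijection with twisted Ward quasigroups on $X$.

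Next I would invoke Theorem \ref{Th:tWard} to write every such quasigroup as $x*y = c\psi(x^{-1}y)$ for some group $(X,\cdot)$, automorphism $\psi\in\aut{X,\cdot}$, and $c\in X$, and then compute the braiding from Theorem \ref{Th:Correspondences}(iii) explicitly. Solving $x*z = y$ yields $x\ld*y = x\psi^{-1}(c^{-1}y)$, and the constant squaring map gives $(x\ld*y)*(x\ld*y) = c\psi(1) = c$, so $r(x,y) = (x\psi^{-1}(c^{-1}y),\,c)$. Setting $\varphi = \psi^{-1}$ (which ranges over $\aut{X,\cdot}$ bijectively) and using that $\varphi$ is a homomorphism, one rewrites $\psi^{-1}(c^{-1}y) = \varphi(c)^{-1}\varphi(y)$, obtaining the stated formula. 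The converse follows immediately by running the same computation backwards, using Example \ref{Ex:Main} to confirm that the resulting $(X,*)$ is a twisted Ward quasigroup.

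For the ``moreover'' assertion, I would note that an isomorphism of braidings $(X,r)\to(X,r')$ is a bijection $f:X\to X$ conjugating one to the other, which in terms of operations means $f$ is simultaneously an isomorphism of $(X,\circ)$ and of $(X,\bullet)$. Because $\bullet$ is determined by $\circ$ in the idempotent left nondegenerate setting (Lemma \ref{Lm:OneOp}(iii)), this collapses to isomorphism of the associated twisted Ward quasigroup $(X,*)$. Proposition \ref{Pr:tWard_iso}(i) then allows the normalization $c=1$, and Proposition \ref{Pr:tWard_iso}(ii) identifies the remaining isomorphism classes with conjugacy classes in $\aut{X,\cdot}$; passing between $\psi$ and $\varphi = \psi^{-1}$ does not change the conjugacy-class partition.

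The calculations involved are essentially routine, so the only real subtlety is the preliminary matching of the word ``latin'' on the braiding side with the word ``quasigroup'' on the twisted Ward side, ensuring that the correspondence of Theorem \ref{Th:Correspondences}(iii) restricts correctly.
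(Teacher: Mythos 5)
Your proposal is correct and follows essentially the same route as the paper: combine the correspondence of Theorem \ref{Th:Correspondences}(iii) with the representation in Theorem \ref{Th:tWard} and the isomorphism classification of Proposition \ref{Pr:tWard_iso}, substituting $\varphi=\psi^{-1}$ to obtain the stated formula. The extra details you supply (matching ``latin'' with ``quasigroup'' under the correspondence, and noting that $\psi\mapsto\psi^{-1}$ preserves the conjugacy-class partition) are points the paper leaves implicit, and both are verified correctly.
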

\begin{proof}
By Theorem \ref{Th:Correspondences}, $r$ is an idempotent left nondegenerate braiding if and only if $r(x,y) = (x\ld*y,(x\ld*y)*(x\ld*y))$ for a twisted Ward left quasigroup $(X,*)$, and $r$ is latin if and only if $(X,*)$ is a twisted Ward quasigroup. Then, by Theorem \ref{Th:tWard}, $x*y = c\psi(x^{-1}y)$ for some group $(X,\cdot)$, $\psi\in\aut{X,\cdot}$ and $c\in X$. Then certainly $(x\ld*y)*(x\ld*y) = c$, and since $x\ld*y = x\psi^{-1}(c^{-1}y)$, we can write $x\circ y = x\ld*y = x\varphi(c)^{-1}\varphi(y)$ by taking $\varphi=\psi^{-1}$. The last part follows by Proposition \ref{Pr:tWard_iso}.
\end{proof}

\section{The Cayley kernel, squaring and twisted Ward left quasigroups of prime order}\label{sec:prime}

For an equivalence relation $R$ on a set $X$, let $X_R$ be a complete set of representatives of the equivalence classes of $R$ and let $[x]_R$ denote the equivalence class of $R$ containing the element $x$.

On a left quasigroup $(X,\cdot)$ define two equivalence relations
\begin{align*}
    &x\sim y\quad\Leftrightarrow\quad L_x = L_y,\\
    &x\equiv y\quad\Leftrightarrow\quad xx=yy.
\end{align*}
The equivalence relation $\sim$ is usually called the \emph{Cayley kernel} in this context. If $\sim$ is the full equivalence $X\times X$ then $(X,\cdot)$ is called \emph{permutational}, cf. Example \ref{Ex:Perm}. If $\sim$ is the equality relation $\{(x,x):x\in X\}$ then $(X,\cdot)$ is called \emph{faithful}.

\begin{lemma}\label{Lm:TwoEquivalences}
Let $(X,\cdot)$ be a left quasigroup. Then the equivalence relations $\sim$ and $\equiv$ intersect trivially and the following inequalities hold for every $x\in X$:
\begin{enumerate}
\item[(i)] $|[x]_\equiv|\le |X_\sim|$,
\item[(ii)] $|[x]_\sim|\le |X_\equiv|$,
\item[(iii)] $|X|\le |X_\sim|\cdot |X_\equiv|$.
\end{enumerate}
\end{lemma}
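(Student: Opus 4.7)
The plan is to prove the trivial intersection property first, since both inequalities (i) and (ii) will follow immediately from it, and the inequality (iii) will follow from a short counting argument using (i) or (ii).

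To see that $\sim$ and $\equiv$ intersect trivially, suppose $x\sim y$ and $x\equiv y$. Since $L_x=L_y$, we have $yx = L_y(x) = L_x(x) = xx$, and since $xx = yy$, we get $yx=yy$. But $L_y$ is a bijection in the left quasigroup $(X,\cdot)$, so cancellation gives $x=y$. This is the key observation.

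From trivial intersection, both (i) and (ii) follow by the same dichotomy argument. For (i), observe that any two distinct elements of $[x]_\equiv$ must lie in distinct $\sim$-classes: if $y,z\in [x]_\equiv$ and $y\sim z$, then $y\equiv z$ and $y\sim z$ force $y=z$. Hence $[x]_\equiv$ injects into the set of $\sim$-classes, giving $|[x]_\equiv|\le |X_\sim|$. Statement (ii) is identical with the roles of $\sim$ and $\equiv$ swapped.

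For (iii), consider the map $X\to X_\sim\times X_\equiv$ sending $x$ to the pair of representatives of $[x]_\sim$ and $[x]_\equiv$. If two elements $x,y$ map to the same pair, then $x\sim y$ and $x\equiv y$, so $x=y$ by the trivial intersection property. Thus the map is injective and $|X|\le |X_\sim|\cdot |X_\equiv|$. Since each step only uses the definitions and the cancellation property of a left quasigroup, there is no real obstacle here; the only thing to get right is the direction of the cancellation in the trivial intersection argument.
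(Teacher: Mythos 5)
Your proof is correct and follows essentially the same route as the paper: trivial intersection via left cancellation from $xy=yy=xx$ (you apply $L_x=L_y$ at $x$ rather than $y$, which is an immaterial variant), with (i)--(iii) as direct counting consequences. Your injection $X\to X_\sim\times X_\equiv$ for (iii) is a minor cosmetic variation on the paper's summation $|X|=\sum_{x\in X_\sim}|[x]_\sim|\le|X_\sim|\cdot|X_\equiv|$.
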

\begin{proof}
Let $x\sim y$ and $x\equiv y$. Then $xy=yy=xx$ and left cancellation yields $x=y$. The inequalities (i) and (ii) are then immediate consequences. Finally, $|X| = \sum_{x\in X_\sim} |[x]_\sim| \le |X_\sim|\cdot |X_\equiv|$ by (ii).
\end{proof}

In general left quasigroups, neither of the two equivalences is a congruence. In finite twisted Ward left quasigroups, the Cayley kernel is not always a congruence (cf. Example \ref{Ex:NotCong}) but $\equiv$ is a congruence (cf. Proposition \ref{Pr:equiv is con}). We do not know whether $\equiv$ is a congruence in infinite Ward left quasigroups, too.

\begin{example}\label{Ex:NotCong}
In the twisted Ward left quasigroup with multiplication table
\begin{displaymath}
\begin{array}{r|rrrr}
    &1&2&3&4\\
    \hline
    1&1&3&2&4\\
    2&1&3&2&4\\
    3&4&2&3&1\\
    4&4&2&3&1
\end{array}
\end{displaymath}
the Cayley kernel is not a congruence: $L_1=L_2$ and $L_{2\cdot 1} = L_1\ne L_3 = L_{2\cdot 2}$.
\end{example}

\begin{proposition}\label{Pr:equiv is con}
In a finite twisted Ward left quasigroup, the equivalence $\equiv$ is a congruence.
\end{proposition}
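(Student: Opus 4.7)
The plan is to derive a single key consequence of \eqref{Eq:tW} and use it, together with finiteness applied in one spot, to verify compatibility of $\equiv$ with both $*$ and $\ldiv$. Writing $\sigma(x)=x*x$ for the squaring map, I first substitute $z=y$ in \eqref{Eq:tW} to obtain $(x*y)*(x*y)=(y*y)*(y*y)$, that is,
\begin{displaymath}
    \sigma(x*y)=\sigma^2(y)\quad\text{for all }x,y\in X,
\end{displaymath}
or equivalently $\sigma\circ L_x=\sigma\circ\sigma$ for every $x\in X$. Because each $L_x$ is a bijection of $X$, this at once gives $\sigma(X)=\sigma(L_x(X))=\sigma^2(X)$, a conclusion valid in any twisted Ward left quasigroup.

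The only place where finiteness will enter is to upgrade this equality to injectivity of $\sigma$ on its image: when $X$ is finite, $\sigma$ restricts to a surjection $\sigma(X)\to\sigma^2(X)=\sigma(X)$ of a finite set onto itself, hence a bijection, so $\sigma|_{\sigma(X)}$ is injective.

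I then check compatibility of $\equiv$ with the two operations. For $*$, the identity $\sigma\circ L_x=\sigma^2$ yields $\sigma(x*z)=\sigma^2(z)$ (independent of $x$) and $\sigma(z*x)=\sigma^2(x)=\sigma(\sigma(x))$ (depending only on $\sigma(x)$), so $x\equiv y$ immediately forces $x*z\equiv y*z$ and $z*x\equiv z*y$, with no appeal to finiteness. For $\ldiv$, I write $x\ldiv z=L_x^{-1}(z)$ and substitute $L_x^{-1}(z)$ into $\sigma\circ L_x=\sigma^2$ to get $\sigma^2(x\ldiv z)=\sigma(z)$; symmetrically $\sigma^2(z\ldiv x)=\sigma(x)$. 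If $x\equiv y$, the pairs $\sigma(x\ldiv z),\sigma(y\ldiv z)$ and $\sigma(z\ldiv x),\sigma(z\ldiv y)$ all lie in $\sigma(X)$ and have equal $\sigma$-images, so the injectivity of $\sigma|_{\sigma(X)}$ gives $x\ldiv z\equiv y\ldiv z$ and $z\ldiv x\equiv z\ldiv y$. The main conceptual point, rather than a genuine obstacle, is to notice that compatibility with $*$ alone is automatic from $\sigma\circ L_x=\sigma^2$, whereas compatibility with $\ldiv$ is precisely what forces the use of finiteness, through the surjection $\sigma(X)\to\sigma(X)$; this also clarifies why the authors do not know the infinite case, since in an infinite $X$ a surjection need not be injective.
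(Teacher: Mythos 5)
Your proof is correct. For compatibility of $\equiv$ with the multiplication you do exactly what the paper does: the instance $\sigma(x*y)=\sigma^2(y)$ of \eqref{Eq:tW} (with $\sigma$ the squaring map) shows that $x*z\equiv y*z$ holds unconditionally and that $z*x\equiv z*y$ follows from $\sigma(x)=\sigma(y)$. Where you diverge is the left-division step. The paper simply says ``by finiteness,'' implicitly invoking the standard fact that in a finite left quasigroup an equivalence preserved by each bijection $L_z$ (each class mapped into a class) is automatically preserved by $L_z^{-1}$, since $L_z$ must then permute the classes. You instead localize the use of finiteness in the squaring map itself: from $\sigma\circ L_x=\sigma^2$ and bijectivity of $L_x$ you get $\sigma(X)=\sigma^2(X)$, so $\sigma$ restricts to a surjection of the finite set $\sigma(X)$ onto itself and is therefore injective there; the identities $\sigma^2(x\ldiv z)=\sigma(z)$ and $\sigma^2(z\ldiv x)=\sigma(x)$ then yield compatibility with $\ldiv$ by cancelling one application of $\sigma$ on $\sigma(X)$. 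Both arguments are sound and use finiteness in exactly one place; yours is more explicit and self-contained, and it isolates the precise obstruction in the infinite case (a surjection $\sigma(X)\to\sigma(X)$ need not be injective), which the paper's one-line appeal to finiteness leaves implicit.
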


\begin{proof}
Since $(xz)(xz)$ does not depend on $x$ by \eqref{Eq:tW}, we always have $xz\equiv yz$. If $x\equiv y$ then  $(zx)(zx) = (xx)(xx)=(yy)(yy) = (zy)(zy)$ by \eqref{Eq:tW}, and hence $zx\equiv zy$. By finiteness, $\equiv$ is invariant under left division, too.
\end{proof}

We proceed towards a classification of twisted Ward left quasigroups of prime order.

\begin{lemma}\label{Lm:SameRow}
Let $(X,\cdot)$ be a twisted Ward left quasigroup and $x,y\in X$. If $L_x$, $L_y$ agree at a point then $L_x=L_y$. In particular, every finite faithful twisted Ward left quasigroup is a quasigroup.
\end{lemma}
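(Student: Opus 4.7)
The plan is very direct and uses the identity \eqref{Eq:tW} together with left cancellation, which is available because $(X,\cdot)$ is a left quasigroup.

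First I would prove the pointwise-agreement claim. Suppose $L_x(a) = L_y(a)$, that is, $x\cdot a = y\cdot a$, for some $a\in X$. For an arbitrary $z\in X$, the identity \eqref{Eq:tW} applied to the triples $(x,a,z)$ and $(y,a,z)$ gives
\begin{displaymath}
    (x\cdot a)\cdot (x\cdot z) = (a\cdot a)\cdot(a\cdot z) = (y\cdot a)\cdot(y\cdot z).
\end{displaymath}
Because $x\cdot a = y\cdot a$, the outer elements on the two ends coincide, and left cancellation in $(X,\cdot)$ yields $x\cdot z = y\cdot z$. Since $z$ was arbitrary, $L_x = L_y$.

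For the second statement, let $(X,\cdot)$ be a finite faithful twisted Ward left quasigroup. To show it is a quasigroup, it suffices to show that each right translation $R_z$ is injective (finiteness then gives bijectivity). If $R_z(x) = R_z(y)$, i.e., $x\cdot z = y\cdot z$, then by the first part $L_x = L_y$, and faithfulness forces $x = y$. Hence $R_z$ is injective for every $z\in X$, and $(X,\cdot)$ is a quasigroup.

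There is no real obstacle here: the whole argument is a one-line application of \eqref{Eq:tW} plus left cancellation, followed by the standard finite-injective-implies-bijective argument. The only thing to be careful about is invoking left cancellation in the correct form, which is guaranteed by the assumption that $(X,\cdot)$ is a left quasigroup and so every $L_x$ is a bijection.
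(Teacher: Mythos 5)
Your proof is correct and is essentially identical to the paper's: both apply \eqref{Eq:tW} to $(x,a,z)$ and $(y,a,z)$, use the hypothesis $x\cdot a=y\cdot a$ to equate the outer factors, and cancel on the left. The deduction of the quasigroup property from faithfulness and finiteness via injectivity of right translations is exactly the intended (and in the paper, tacit) argument.
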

\begin{proof}
Assume that $xc=yc$ for some $c\in X$. Then for every $z\in X$ we have $(xc)(xz) = (cc)(cz)= (yc)(yz)=(xc)(yz)$ and we obtain $xz=yz$ by left cancellation.
\end{proof}

\begin{proposition}\label{p:TwoEquivalences}
Let $(X,\cdot)$ be a twisted Ward left quasigroup. Then the following conditions hold for every $x\in X$:
\begin{enumerate}
\item[(i)] $|[x]_\equiv|=|X_\sim|$,
\item[(ii)] if $X$ is finite then $|[x]_\sim|=|X_\equiv|$,
\item[(iii)] $|X|=|X_\equiv|\cdot |X_\sim|$.
\end{enumerate}
\end{proposition}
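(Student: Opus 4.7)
The plan is to upgrade the three inequalities in Lemma \ref{Lm:TwoEquivalences} to equalities by constructing, for each $x_0 \in X$, an explicit injection $X_\sim \hookrightarrow [x_0]_\equiv$. I would first handle the case where $x_0$ has the special form $\sigma(w) = w*w$, and then reduce the general case to this one using the structure of the squaring map on $X$.

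For the first step, fix $w \in X$ and consider the map $f_w \colon X_\sim \to X$, $x \mapsto x*w$. By the contrapositive of Lemma \ref{Lm:SameRow}, $L_x \neq L_{x'}$ forces $x*w \neq x'*w$, so $f_w$ is injective. Applying \eqref{Eq:tW} with $y = w$ gives $(x*w)*(x*w) = (w*w)*(w*w)$, i.e., $x*w \equiv \sigma(w)$, so the image of $f_w$ lies in $[\sigma(w)]_\equiv$. Combining this with Lemma \ref{Lm:TwoEquivalences}(i) (and Cantor--Schr\"oder--Bernstein in the infinite case) yields $|[\sigma(w)]_\equiv| = |X_\sim|$.

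For the second step, I would show every $\equiv$-class is of the form $[\sigma(w)]_\equiv$. Applying \eqref{Eq:tW} with $y = z$ gives $\sigma(x*z) = \sigma(\sigma(z))$, that is, $\sigma \circ L_x = \sigma \circ \sigma$ as maps $X \to X$. Since $L_x$ is a bijection, taking images forces $\sigma(X) = \sigma(\sigma(X))$. Now given arbitrary $x_0 \in X$, we have $\sigma(x_0) \in \sigma(X) = \sigma(\sigma(X))$, so $\sigma(x_0) = \sigma(\sigma(w))$ for some $w$, meaning $x_0 \equiv \sigma(w)$, so $[x_0]_\equiv = [\sigma(w)]_\equiv$. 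This combined with the first step establishes (i).

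Parts (iii) and (ii) then follow by counting. For (iii), summing (i) over all $\equiv$-classes gives $|X| = \sum_{D \in X/\equiv} |D| = |X_\equiv| \cdot |X_\sim|$, which holds for arbitrary cardinalities. For (ii), assume $X$ is finite; then $|X| = \sum_{C \in X/\sim} |C|$ is a sum of $|X_\sim|$ terms each bounded above by $|X_\equiv|$ (Lemma \ref{Lm:TwoEquivalences}(ii)), and since the total already equals $|X_\sim| \cdot |X_\equiv|$ by (iii), each summand must equal $|X_\equiv|$. The main obstacle in this approach is the passage from special classes $[\sigma(w)]_\equiv$ (where the injection $f_w$ is available) to arbitrary $\equiv$-classes; the linchpin is the identity $\sigma \circ L_x = \sigma \circ \sigma$ extracted from \eqref{Eq:tW}, which together with bijectivity of $L_x$ pins down $\sigma(X) = \sigma(\sigma(X))$ and thereby exhibits every class in the required form.
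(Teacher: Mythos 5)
Your proof is correct and takes essentially the same route as the paper's: both arguments show that the column $\{x*w : x\in X\}$ consists of exactly $|X_\sim|$ distinct elements (via Lemma \ref{Lm:SameRow}) all lying in a single $\equiv$-class (via \eqref{Eq:tW}), and then deduce (iii) and (ii) by the same counting against Lemma \ref{Lm:TwoEquivalences}. The only difference is cosmetic, in how one sees that every $\equiv$-class is reached: the paper observes directly that each $u\in X$ lies in some column because $u=x*(x\ld* u)$, whereas you take a slight detour through $\sigma(X)=\sigma(\sigma(X))$ for the squaring map $\sigma$; both are valid.
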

\begin{proof}
(i) Fix $z\in X$ and observe that for every $x\in X$ we have $(xz)(xz) = (zz)(zz)$. Hence the elements $R_z(X)=\{xz:x\in X\}$ in the column indexed by $z$ all have the same square. By Lemma \ref{Lm:SameRow}, the cardinality of $R_z(X)$ is equal to $|X_\sim|$. Hence $|[u]_\equiv|\geq|X_\sim|$ for every $u\in R_z(x)$, and Lemma \ref{Lm:TwoEquivalences}(i) asserts equality. By varying $z$, we will encounter all elements of $X$ in this fashion.

(iii) By (i), all blocks of $\equiv$ have the same size $|X_\sim|$, so $|X| = |X_\equiv|\cdot |X_\sim|$.

(ii) By Lemma \ref{Lm:TwoEquivalences}(ii) and by part (iii), we have $|X|=\sum_{x\in X_\sim} |[x]_\sim| \leq |X_\sim|\cdot |X_\equiv| = |X|$. Hence we have an equality and $|[x]_\sim|=|X_\equiv|$ follows because $|[x]_\sim|\le |X_\equiv|$ for every $x\in X$.
\end{proof}

\begin{example}
The sets $X_\sim$, $X_\equiv$ may have different cardinalities. For instance, in the twisted Ward left quasigroup $X$ with multiplication table
\begin{displaymath}
\begin{array}{r|rrrrrr}
    &1&2&3&4&5&6\\
    \hline
    1&2&1&4&3&5&6\\
    2&3&4&1&2&6&5\\
    3&2&1&4&3&5&6\\
    4&3&4&1&2&6&5\\
    5&2&1&4&3&5&6\\
    6&3&4&1&2&6&5
\end{array}
\end{displaymath}
we have $|[x]_\sim|=3$ and $|[x]_\equiv|=2$ for every $x\in X$. Note that $X$ is neither permutational nor a quasigroup.
\end{example}

\begin{theorem}\label{Th:Prime}
Every twisted Ward left quasigroup of prime order is either permutational or a quasigroup.
\end{theorem}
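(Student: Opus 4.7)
The plan is to use Proposition \ref{p:TwoEquivalences}(iii), which states that $|X| = |X_\equiv|\cdot |X_\sim|$ for any finite twisted Ward left quasigroup $(X,\cdot)$. Since $|X| = p$ is prime, one of the two factors must equal $1$, and I will analyze these two cases.

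First I would dispatch the case $|X_\sim| = 1$. This means that $\sim$ is the full equivalence relation, i.e., all left translations $L_x$ coincide. By definition this is exactly the statement that $(X,\cdot)$ is permutational, and we are done.

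Next I would handle the case $|X_\equiv| = 1$. Here the squaring map $x \mapsto x\cdot x$ is constant, so the single block $[x]_\equiv$ equals $X$. By Proposition \ref{p:TwoEquivalences}(i), $|[x]_\equiv| = |X_\sim|$, so $|X_\sim| = p = |X|$, meaning every $\sim$-class is a singleton and $(X,\cdot)$ is faithful. Lemma \ref{Lm:SameRow} then immediately gives that $(X,\cdot)$ is a quasigroup, completing the proof.

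The argument is essentially packaging three earlier results, so there is no real obstacle; the only subtlety is to notice that the multiplicativity in Proposition \ref{p:TwoEquivalences}(iii) reduces the statement to a one-line case split once primality is invoked. The structural work lies entirely in the preceding lemmas (in particular that $\sim$ and $\equiv$ are ``complementary'' in the sense of having block sizes whose product is $|X|$), and the proof here is just the combinatorial observation that $p$ has only trivial factorizations.
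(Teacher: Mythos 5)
Your proof is correct and follows essentially the same route as the paper: both arguments rest on Proposition \ref{p:TwoEquivalences} (you use the product formula in part (iii), the paper uses the uniform $\sim$-class size from part (ii), which are two faces of the same fact) together with Lemma \ref{Lm:SameRow} to pass from faithfulness to the quasigroup property. No gaps.
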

\begin{proof}
Let $X$ be a twisted Ward left quasigroup of prime order. By Proposition \ref{p:TwoEquivalences}, all equivalence classes of $\sim$ have the same cardinality. Since $X$ is of prime order, it follows that either $\sim$ has a single equivalence class or all equivalence classes of $\sim$ are singletons. In the former case, $X$ is permutational. In the latter case, $X$ is faithful and thus a quasigroup by Lemma \ref{Lm:SameRow}.
\end{proof}

\begin{corollary}\label{Cr:NtWlq}
Let $q(n)$ (resp. $\ell(n)$) denote the number of twisted Ward quasigroups (resp. twisted Ward left quasigroups) of order $n$ up to isomorphism. Let $p(n)$ be the partition number, i.e., the number of ways in which $n$ can be written as a sum of nonincreasing positive integers. Then $\ell(n)\ge q(n)+p(n)$ if $n>1$. Moreover, if $n$ is prime then $\ell(n) = q(n)+p(n) = n-1+p(n)$.
\end{corollary}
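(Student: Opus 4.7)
The plan is to split the count into two disjoint families of twisted Ward left quasigroups and then invoke the prime-order dichotomy for equality.

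First, every twisted Ward quasigroup is a twisted Ward left quasigroup, so the $q(n)$ isomorphism classes of twisted Ward quasigroups of order $n$ contribute $q(n)$ to $\ell(n)$. Second, I claim that there are exactly $p(n)$ permutational twisted Ward left quasigroups of order $n$ up to isomorphism, and none of them is a quasigroup when $n\geq 2$. By Example \ref{Ex:Perm}, any structure $(X,*_f)$ with $x*_fy=f(y)$ for a bijection $f$ of $X$ is a twisted Ward left quasigroup. Two such structures $(X,*_f)$ and $(X,*_g)$ are isomorphic via a bijection $\rho\colon X\to X$ if and only if $\rho(f(y))=g(\rho(y))$ for every $y\in X$, i.e., $f$ and $g$ are conjugate in $\mathrm{Sym}(X)$. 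Since conjugacy classes in the symmetric group on $n$ letters are parameterized by cycle types, which are in bijection with partitions of $n$, this yields exactly $p(n)$ isomorphism classes. Moreover, if $n\geq 2$ then every right translation $R_y$ in a permutational twisted Ward left quasigroup is the constant map with value $f(y)$, so the structure is not a quasigroup and hence is disjoint from the first family.

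Combining the two disjoint families gives $\ell(n)\ge q(n)+p(n)$ for $n\ge 2$.

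For the prime case, suppose $n$ is prime. Theorem \ref{Th:Prime} asserts that every twisted Ward left quasigroup of prime order is either permutational or a quasigroup, so the two families above exhaust all isomorphism classes and $\ell(n)=q(n)+p(n)$. Finally, when $n$ is prime, the only group of order $n$ is the cyclic group $\Z/n\Z$, whose automorphism group $(\Z/n\Z)^\times$ is abelian of order $n-1$ and therefore has $n-1$ conjugacy classes. By Corollary \ref{Cr:NtWq}, $q(n)=n-1$, completing the argument. The main technical point is the bookkeeping of isomorphism classes among permutational twisted Ward left quasigroups via the classical cycle-type parameterization; the remaining ingredients are immediate from Example \ref{Ex:Perm}, Theorem \ref{Th:Prime}, and Corollary \ref{Cr:NtWq}.
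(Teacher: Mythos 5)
Your proof is correct and follows essentially the same route as the paper: count the $q(n)$ quasigroup classes and the $p(n)$ permutational classes (parameterized by conjugacy classes in $S_n$, i.e., cycle types), note they are disjoint for $n\ge 2$ since permutational structures have constant right translations, and use Theorem \ref{Th:Prime} plus Corollary \ref{Cr:NtWq} with $\mathrm{Aut}(C_n)$ abelian of order $n-1$ for the prime case. The only difference is that you spell out the conjugacy argument that the paper dismisses as ``easy to see.''
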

\begin{proof}
Denote by $(X,f)$ the permutational (Ward) left quasigroup with multiplication given by $x*y=f(y)$. It is easy to see that $(X,f)$ is isomorphic to $(X,g)$ if and only if $f$ and $g$ are conjugate in the symmetric group $S_X$. Recall that there are $p(|X|)$ conjugacy classes in $S_X$. Moreover, if $|X|>1$ then $(X,f)$ is never a quasigroup. Hence $\ell(n)\ge q(n)+p(n)$ if $n>1$.

Suppose that $n$ is prime. By Theorem \ref{Th:Prime}, $\ell(n) = q(n)+p(n)$. The only group of order $n$ is the cyclic group $C_n$. Since $\aut{C_n}$ is an abelian group of order $n-1$, we have $q(n) = n-1$ by Corollary \ref{Cr:NtWq}.
\end{proof}

The following table summarizes the numbers of twisted Ward left quasigroups $\ell(n)$ and twisted Ward quasigroups $q(n)$ of order $n\le 11$ up to isomorphism, as well as the partition number $p(n)$.
\begin{displaymath}
    \begin{array}{rrrrrrrrrrrr}
        n&1&2&3&4&5&6&7&8&9&10&11\\
        \hline
        \ell(n)&1&3&5&14&11&31&21&93&64&?&66\\
        q(n)&1&1&2&5&4&5&6&25&14&9&10\\
        p(n)&1&2&3&5&7&11&15&22&30&42&56\\
    \end{array}
\end{displaymath}
Neither of the sequences $(\ell(n))$, $(q(n))$ appears in the Online Encyclopedia of Integer Sequences \cite{OEIS}. It is not difficult to calculate the numbers $q(n)$ for small values of $n$ by hand, using Corollary \ref{Cr:NtWq}. We can then calculate $\ell(n)$ for prime orders $n$ from Corollary \ref{Cr:NtWlq}. The remaining values $\ell(n)$ (and for independent verification also all other values except for $\ell(10)$ and $\ell(11)$) were calculated by the finite model builder \texttt{Mace4} \cite{McCune}.

We conclude the paper with a construction that yields all finite twisted left Ward quasigroups in principle.

\begin{proposition}
Let $X$ and $A$ be sets. For every $x\in X$, let $f_x$ be a bijection on $X\times A$ and let us write $f_x(y,b) = (f_x^{[1]}(y,b),f_x^{[2]}(y,b))$. Define $(X\times A,*)$ by
\begin{displaymath}
    (x,a)*(y,b) = f_x(y,b).
\end{displaymath}
Then $(X\times A,*)$ is a twisted Ward left quasigroup if and only if
\begin{equation}\label{Eq:Construction}
    f_{f_x^{[1]}(y,b)}f_x
\end{equation}
is independent of $x$. Moreover, every finite twisted Ward left quasigroup is isomorphic to one of this form.
\end{proposition}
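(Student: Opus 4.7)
My plan begins with the equivalence. Since $L_{(x,a)}=f_x$ is a bijection by hypothesis, $(X\times A,*)$ is automatically a left quasigroup regardless of further conditions, so I only need to analyze when the identity \eqref{Eq:tW} holds. First I unfold both sides: the left hand side of \eqref{Eq:tW} evaluates to $L_{f_x(y,b)}(f_x(z,c))=f_{f_x^{[1]}(y,b)}(f_x(z,c))$, while the right hand side evaluates to $f_{f_y^{[1]}(y,b)}(f_y(z,c))$. For these to agree for all $z\in X$ and $c\in A$ (with $y$, $b$ fixed), the two compositions must agree as functions on $X\times A$, and letting $x$ range over $X$ while comparing to the $y$-expression gives exactly the stated independence-of-$x$ condition. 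This is a direct unfolding with no deeper ingredient.

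For the ``moreover'' clause, let $(Y,*)$ be a finite twisted Ward left quasigroup. The plan is to exploit Proposition \ref{p:TwoEquivalences}(i), which asserts that every block of the Cayley kernel $\sim$ on $Y$ has the same cardinality $m:=|Y_\equiv|$. I set $X:=Y_\sim$, pick any set $A$ with $|A|=m$, and choose a bijection $\phi\colon Y\to X\times A$ that sends each class $[x]_\sim$ onto the fiber $\{x\}\times A$. The uniform block size is precisely what makes a single set $A$ admissible as the fiber index for every class simultaneously.

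Next I transport the left translations: for each $x\in X$ define $f_x:=\phi L_y\phi^{-1}$ for any $y\in[x]_\sim$. This is well-defined because $L_y$ depends only on the $\sim$-class of $y$, and each $f_x$ is a bijection of $X\times A$ since each $L_y$ is a bijection of $Y$. By construction $\phi$ becomes an isomorphism from $(Y,*)$ onto the operation on $X\times A$ given by $(x,a)*(y,b)=f_x(y,b)$. Since the image satisfies \eqref{Eq:tW}, the equivalence already established in the first part forces the family $(f_x)$ to satisfy the independence-of-$x$ condition, completing the argument.

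The essential (and only nontrivial) step in the whole argument is the appeal to Proposition \ref{p:TwoEquivalences} for the common cardinality of the $\sim$-classes; without that uniform-block-size fact one could not parametrize $Y$ as a single cartesian product $X\times A$ with the $\sim$-classes as fibers, and the construction would not assemble. Everything else is routine bookkeeping.
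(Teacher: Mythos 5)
Your proof is correct and follows essentially the same route as the paper's: a direct unfolding of \eqref{Eq:tW} shows the left-hand side equals $f_{f_x^{[1]}(y,b)}f_x(z,c)$, and the representation of a finite twisted Ward left quasigroup as $X\times A$ rests, exactly as in the paper, on the uniform block size of the Cayley kernel from Proposition \ref{p:TwoEquivalences}. Your version merely spells out the transport of the left translations via the bijection $\phi$ in more detail than the paper does.
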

\begin{proof}
The groupoid $(X\times A,*)$ is a left quasigroup. The identity \eqref{Eq:tW} requires that the expression $((x,a)*(y,b))*((x,a)*(z,c))$ is idependent of $(x,a)$. Expanding the expression, we obtain
\[
     f_x(y,b)*f_x(z,c) = (f_x^{[1]}(y,b),f_x^{[2]}(y,b))*f_x(z,c) = f_{f_x^{[1]}(y,b)}f_x(z,c).
\]
Hence $(X\times A,*)$ satisfies \eqref{Eq:tW} if and only if \eqref{Eq:Construction} is independent of $x$.

By Proposition \ref{p:TwoEquivalences}, the Cayley kernel of a finite twisted Ward left quasigroup $W$ has blocks of the same size, say each bijectively mapped onto a fixed set $A$. We can then represent the underlying set of $W$ as $X\times A$ for a suitable set $X$. The product $(x,a)$ and $(y,b)$ in $W$ depends only on $x$, $y$ and $b$. Moreover, for a fixed $(x,a)$, the left translation by $(x,a)$ in $W$ is a bijection of $X\times A$ depending on $x$ only, and this is how we obtain the mappings $f_x$.
\end{proof}

\end{document}